\numberwithin{equation}{section}
\newcommand{\R}{\mathbb R}
\newcommand{\eps}{\varepsilon}
\newcommand{\lto}{\left(}
\newcommand{\rto}{\right)}
                          \newcommand{\dx}{\mathrm{d}x}
  \newcommand{\Rn}{\mathbb{R}^n}
   \newcommand{\N}{\mathbb{N}}
    \newcommand{\Om}{\Omega}
     \newcommand{\test}{C_c^\infty}
      \newcommand{\prt}{\partial}
       \newcommand{\omn}{\omega_{n}}
        \newcommand{\dd}{\mathrm{d}}
          \newcommand{\lapd}{\Delta d}
           \newcommand{\lapdf}{\Delta_F d_F}
                  \newcommand{\Ln}{\mathcal{H}^n}
                   \newcommand{\Hnmo}{\mathcal{H}^{n-1}}
                    \newcommand{\intom}{\int_{\Om}}
\newcommand{\be}{\begin{equation}}
 \newcommand{\ee}{\end{equation}}
  \newcommand{\bea}{\begin{eqnarray}}
   \newcommand{\eea}{\end{eqnarray}}
    \newcommand{\bean}{\begin{eqnarray*}}
     \newcommand{\eean}{\end{eqnarray*}}
\DeclareMathOperator{\dist}{dist}
  \DeclareMathOperator{\Div}{div}
\newtheorem{theorem}{Theorem}[section]
 \newtheorem{proposition}[theorem]{Proposition}
  \newtheorem{lemma}[theorem]{Lemma}
   \newtheorem{corollary}[theorem]{Corollary}
    \theoremstyle{definition}
     \newtheorem{definition}[theorem]{Definition}
       \newtheorem{remark}[theorem]{Remark}
\title[Weighted anisotropic Sobolev type inequality]{A weighted anisotropic Sobolev type inequality\\ and its applications to Hardy inequalities}
        \date{\today}
\author[di Blasio]{Giuseppina di Blasio}
\author[Pisante]{Giovanni Pisante}
\author[Psaradakis]{Georgios Psaradakis}
\address[G. di Blasio \& G. Pisante]{Dipartimento di Matematica e Fisica
	\newline\indent
	Universit\`a degli Studi della Campania "L. Vanvitelli"
	\newline\indent
	Viale Lincoln 5, 81100 Caserta, Italy}
\email{giuseppina.diblasio@unicampania.it}
\email{giovanni.pisante@unicampania.it}
\address[G. Psaradakis]{Lehrstuhl f\"{u}r Mathematik IV
	\newline\indent
	WIM -  Universit\"{a}t Mannheim
	\newline\indent
	B6 28 - 68159 Mannheim, Germany}
\email{psaradakis@uni-mannheim.de}
\begin{document}
\maketitle
%
%
%
%
\begin{abstract}
In this paper we focus our attention on an embedding result for a weighted Sobolev space that involves as weight the distance function from the boundary taken with respect to a general smooth gauge function $F$. Starting from this type of inequalities we prove some refined Hardy-type inequalities.
\end{abstract}

\begin{center}
	\begin{minipage}{10cm}
		\tableofcontents
	\end{minipage}
\end{center}
%

\section{Introduction}\label{intro}
Let $\Om$ be an open, connected, proper subset of $\Rn$ where $n\geq 2$. In this paper we focus our attention on an embedding result for a weighted Sobolev space that  involves as weight the distance function from the boundary, $\partial \Omega$, taken with respect to a general smooth gauge function  $F$.  More precisely we prove that there exists a constant $C>0$ such that for any $v\in\test(\Om)$
\begin{equation}\label{eq:general-sobolev-intro}
\|d_F^{1/p'-\alpha}v\|^p_{L^{p^{*,\alpha}}(\Om)}\leq C \intom\left(d_{F}^{-1}|v|^p+d_{F}^{p-1}F^p(\nabla v)\right)\dx,
\end{equation}
where  $d_F$ denotes the anisotropic distance function to $\prt\Om$,  $0\leq \alpha <1$, $1\leq p<\frac{n}{1-\alpha}$, $p^{*,\alpha}:=\frac{np}{n-p(1-\alpha)}$ and $C=C(n,p,\alpha,F)>0$ (see Theorem \ref{thm-main}). When $\alpha=0$ this result is closely related to the classical Sobolev inequality being $p^{*,0}=p^*$ (see Remark \ref{rem:Sobolev-Classic}). We point out that \eqref{eq:general-sobolev-intro} can be read as a continuous embedding between weighted spaces (see \cite[Section 8.10]{KJF} and Remark \ref{rem:Kufner}) since it amounts writing
\[
W_0^{1,p}(\Omega,d_F^{\omega_\iota}) \hookrightarrow L^q\Big(\Omega;d_F^{(1/p'-\alpha)p^{*,\alpha}}\Big),\hspace{1em}\omega_\iota=(-1,p-1).
\]

Our interest in this type of inequalities has been mostly motivated by their deep connections with refined Hardy-type inequalities. This link has been extensively investigated in the Euclidean case for instance in \cite{FMT,FPs,FrL}. In fact, a careful look at their proofs shows that the crucial inequalities actually rely on an application of suitable versions of \eqref{eq:general-sobolev-intro}.
More precisely it has been proven  in \cite{FMT} (for the case $\alpha=0$) and lately in \cite{FPs} (for $0\leq \alpha< 1$) that, if $\Om$ is a uniformly $C^2$ mean convex domain having finite volume, then for any $v\in\test(\Om)$ it holds
\begin{equation}\label{ineq.FPs}
C\big\|d^{1/p'-\alpha}v\big\|^p_{L^{p^{*,\alpha}}(\Om)} \leq  \intom d^{p-1}|\nabla v|^p~\dd x
+
\intom|v|^p(-\lapd)\dd x,
\end{equation}
with a positive constant $C=C(n,p,\alpha,\Om)$. Let us point out that the mean convexity assumption on $\Omega$ ensures that $-\lapd$ can be considered as a nonnegative Radon measure and this seems to be a crucial point in obtaining estimate \eqref{ineq.FPs}. Analogously, we can prove that if $-\lapdf \geq 0$, where $\lapdf$ is the distributional anisotropic $F$-Laplacian of $d_F$ (see \eqref{F-lap}),  then the following anisotropic version of \eqref{ineq.FPs} is true for any $v\in\test(\Om)$:
\begin{equation}\label{eq:laplacian:intro}
C\|d_F^{1/p'-\alpha}v\|^{p}_{L^{p^{*,\alpha}}(\Omega)}
\leq\intom d_F^{p-1}F(\nabla v)^p~\dd x
+
\intom |v|^p (-\lapdf)\dd x,
\end{equation}
for some positive constant $C=C(n,p,\alpha,F,\Om)$ (cf. Corollary \ref{cor_02} for the proof under less restrictive assumptions on $\Omega$).

It is worth to recall that in the very special case of the half space, i.e. if $\Omega =\Rn_+:=\{x=(x_1,...,x_n)\in\Rn~|~x_n>0\}$, where we simply have  $d(x)=x_n$ and so $-\lapd=0$, choosing $\alpha=(p-1)(p+n-1)$, the corresponding inequality
\[
\int_{\Rn_+}x_n^{p-1}|\nabla v|^p~\dd x
\geq
C(n,p)\Big(\int_{\Rn_+}x_n^{p-1}|v|^{p(p+n-1)/(n-1)}\dd x\Big)^{(n-1)/(p+n-1)}
\]
had been established using different approaches for example in \cite[\S6]{MSh} for $p=2$, in \cite{CR-O,Ng} for general $p>1$ and in \cite{DLV} for the fractional case.

Improved versions of the anisotropic Hardy inequality for $p=2$ have been investigated in \cite{DdBG}. In the last section of our paper we adapt the (by now classical) method from \cite{BFT} to our case and, once \eqref{eq:laplacian:intro} has been established, we deduce a series of improved versions of the anisotropic Hardy inequality. More precisely, at first we prove that if $p\geq 2$, under the same assumptions that ensure the validity of \eqref{eq:laplacian:intro} (see Theorem \ref{Hardy-Sobolev-anis}), for any $u\in\test(\Om)$ the following Hardy-Sobolev estimate holds true for some positive constant $C=C(n,p,b,F,\Om)>0$
\[
C \left\|\frac{u}{d_F^{\alpha}}\right\|_{L^{p^{*,\alpha}}(\Om)}^{p} \leq \intom F(\nabla u)^p~\dd x-\Big(\frac{p-1}{p}\Big)^p\intom\frac{|u|^p}{d_F^p}\dd x.
\]
Then, assuming further that $\Omega$ is a uniformly Lipschitz domain with finite measure, we can prove a Hardy estimate of the anisotropic norm of the gradient in a suitable Morrey space (see Theorem \ref{thm-HM}). More precisely we prove there exists a constant $C=C(n,p,F,\Om)>0$ such that
\[
C \| F(\nabla v) \|_{L^{1, n-n/p}(\Om)} \leq \intom F(\nabla v)^p~\dd x-\Big(\frac{p-1}{p}\Big)^p\intom\frac{|v|^p}{d_F^p}\dd x.
\]
\section{Preliminaries}
In this section we will fix some notation and recall some well known facts about convex functions that will be useful for our later purpose. For a detailed treatment of this material we refer the reader to classical monographs as \cite{Roc70,Sch13}. Throughout the paper we assume that $F:\Rn\mapsto[0,\infty)$ is a \emph{strictly convex norm}, i.e. an even \emph{gauge} function positive except at the origin and such that $\{\xi\in \Rn \,:\; F(\xi)<1\}$ is a strictly convex set. In particular $F$ will be a non-negative positively homogeneous convex function with $F(0)=0$. Given the norm $F$, the \emph{polar} norm of $F$, $F^o$, is the closed gauge function defined by
\[
F^o(\eta):=\sup_{\xi\not=0}\frac{\xi\cdot \eta}{F(\xi)},~~~~~~\eta\in \Rn\setminus \{0\}.
\]
From the previous definition it immediately follows the useful inequality
\begin{equation}
\label{F-young}
|\langle \xi, \eta\rangle| \le F(\xi) F^{o}(\eta) \qquad \forall ~ \xi, \eta \in  \Rn.
\end{equation}
In particular we recall that if $F\in C^1(\R^n\setminus\{0\})$ the strict convexity of $F$ ensures $F^o$ satisfies the same regularity (see \cite[Corollary 1.7.3.]{Sch13} and also \cite{CS}). The following identities are easily proven (see for instance \cite[Lemma 3.1]{CS})
\begin{equation}
F\lto F_\xi^o(\xi)\rto=F^o(  F_\xi(\xi))=1\quad \forall ~ \xi \not=0. \label{eq:F1}
\end{equation}

To the polar norm $F^0$ we associate the  \emph{Minkowski metric} on $\R^n$ defined by
\[
d_{F^o}(x,y)= F^o(x-y).
\]
We recall that to any norm can be associated a  Minkowski metric on $\R^n$, i.e. a metric compatible with addition and scalar multiplication (indeed there is a one-to one correspondence, see \cite[Section 15]{Roc70}).

Throughout, a \emph{domain} will be a connected open set $\Omega \subset \R^n$, $n\geq2$, having non-empty boundary. A key role in our weighted inequalities will be played by the function that assigns to any point $x\in\bar\Omega$ its Minkowski distance from the boundary of $\Omega$, $\partial \Omega$. We will refer to this function as the \emph{anisotropic distance} and we will use the notation
\[
d_F(x):=\inf_{y\in\partial\Omega} d_{F^o}(x,y)=\inf_{y\in\partial\Omega} F^o(x-y), \quad x\in\bar\Om .
\]
It is clear that $d_F$ is a Lipschitz continuous function in $\bar{\Om}$. If $d_F\in L^\infty(\Om)$, therefore $d_F\in W_0^{1,\infty}(\Om)$, we say that $\Omega$ has finite \emph{$F$-anisotropic inner radius} defined by
\[
r_\Omega := \sup_{x\in\Om} d_F(x).
\]
Moreover $d_F$ satisfies the eikonal-type equation
\be\label{Fd}
F\big(\nabla d_F(x)\big)=1 \quad\text{a.e. in }\Omega.
\ee
For an extensive treatment of the properties of anisotropic distance functions we refer to \cite{CrM}.

We recall that the \emph{anisotropic $F$-Laplacian} of a function $u\in W^{1,1}_{loc}(\Om)$, denoted by $\Delta_Fu$, is defined as the distributional divergence of the $L^1_{loc}(\Om)$ vector field $F(\nabla u) F_\xi (\nabla u)$, i.e.
\begin{equation}\label{F-lap}
\Delta_Fu [\varphi] = -\int_\Omega  F(\nabla u)F_\xi(\nabla u) \cdot \nabla \varphi \, \dx\,,\hspace{1em}\forall~ \varphi \in\test(\Om).
\end{equation}
If $u\in C^2(\Om)$ than $\Delta_Fu$ is simply represented by the function
\[
\Delta_Fu(x)= \text{div}(F(\nabla u(x))F_\xi(\nabla u(x))).
\]

We will use the notation
\[
-\Delta_{F}u \ge 0 \quad\text{in }\mathcal{D}'(\Om),
\]
if
\[
\int_{\Omega}F(\nabla u)F_{\xi}(\nabla u) \cdot \nabla \varphi\, \dx \ge 0\hspace{1em}\forall ~ \varphi \in\test(\Omega),\,\varphi \ge 0.
\]

\section{Weighted anisotropic Sobolev inequality}
The main results of this section stem from the following functional version of the anisotropic isoperimetric inequality
\begin{equation}\label{Ga-Ni}
S_{n,F}\|f\|_{L^{\frac{n}{n-1}}(\Rn)}
\leq
\int_{\Rn}F(\nabla f)\dx
\hspace{1em}\forall~f\in\test(\Rn).
\end{equation}
The inequality \eqref{Ga-Ni} with a non sharp constant can be easily obtained from the classical Gagliardo-Nirenberg inequality by using the properties of the norm $F$. The more subtle result, with the best possible constant $S_{n,F}$, has been established in \cite{AlFTrL}. A local version of \eqref{Ga-Ni}, without the best constant, is also easily otained. Indeed, recalling that if $V\subset \R^n$ is any sufficiently smooth bounded domain then (see \cite[p. 189]{Mz})
\begin{equation*}
n\omn^{1/n} \|f\|_{L^{\frac{n}{n-1}}(V)} \leq \| \nabla f\|_{L^1(V)} + \|f\|_{L^1(\partial V)}\hspace{1em}\forall~f\in C^\infty(V),
\end{equation*}
where $\omn$ denote the volume of the unit ball in $\mathbb{R}^n$, again from the properties of $F$ it readily follows that there exists a positive constant $C_1=C_1(F)$ such that
\begin{equation}\label{Ga-Ni-Anis-bounded}
n\omn^{1/n} \|f\|_{L^{\frac{n}{n-1}}(V)} \leq  C_1 \int_{V}F(\nabla f)\dx + \|f\|_{L^1(\partial V)}.
\end{equation}

Given $p$, $\alpha$, $\beta\in \R$ with $\beta\not= p(1-\alpha)$, we define the number
\[
p_{\beta,\alpha}:=\frac{\beta p}{\beta-p(1-\alpha)}.
\]
Let us explicitly note that, if $\beta=n\in \N$, the definition of $p_{n,\alpha}$ extends the classical definition of critical Sobolev exponent. Indeed, for $1\leq p < n$ and $\alpha =0$ we simply have
\[
p_{n,0}=p^*:=\frac{np}{n-p}.
\]
When $\alpha$ runs in the interval $[0,1)$ the value of $p_{n,\alpha}$ runs in the interval $(p,p*]$. In general, for $\alpha \in [0,1)$ and $1\leq p < \frac{n}{1-\alpha}$ we clearly have $ p < p_{n,\alpha} < +\infty$ and we will will use the more friendly notation
\[
p^{*,\alpha}:=p_{n,\alpha}=\frac{n p}{n-p(1-\alpha)}.
\]

From now on we fix the dimension $n\geq 2$. We start with a technical lemma based on a suitable interpolation inequality and on \eqref{Ga-Ni}.

\begin{lemma}\label{lem:pre-sobolev}
Let $\Om\subset\Rn$ be a domain. For any $b\in \R$, $\alpha \in [0,1)$ and $\eps >0$, the following inequality holds
\be\label{LocalSobolev}
\begin{split}
\| d_F^{b} \, v \|_{L^{1^{*,\alpha}}(\Omega)}   \leq  & \; \frac{(1-\alpha)\eps^{-\frac{\alpha}{1-\alpha}}}{S_{n,F}}
 \int_{\Omega} d_F^{b+\alpha} F(\nabla v) \dx  \\   + &  \left(\frac{(1-\alpha)\eps^{-\frac{\alpha}{1-\alpha}}|b+\alpha|}{S_{n,F}}+\eps\alpha\right)  \int_{\Omega}  d_F^{b-(1-\alpha)} |v| \dx,
\end{split}
\ee
for any $v\in\test(\Omega)$, where $S_{n,F}$ is the constant in \eqref{Ga-Ni}.
\end{lemma}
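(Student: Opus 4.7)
The plan is to combine the anisotropic Sobolev inequality \eqref{Ga-Ni} with a weighted Hölder interpolation and a one-parameter Young inequality; the eikonal identity $F(\nabla d_F)=1$ from \eqref{Fd} is what allows the chain-rule terms to be absorbed cleanly. First I apply \eqref{Ga-Ni} to $f:=d_F^{b+\alpha}v$. Since $v\in\test(\Om)$ has compact support $K\subset\Om$ on which $d_F\ge\delta>0$, the function $f$ is Lipschitz with compact support in $\Om$, so \eqref{Ga-Ni} is applicable (by density of $\test(\Rn)$ in $W^{1,1}$-functions with compact support). Expanding $\nabla f$ by the product rule and using the subadditivity and even positive homogeneity of the norm $F$ together with \eqref{Fd}, and simplifying $b+\alpha-1=b-(1-\alpha)$, I obtain
\begin{equation*}
S_{n,F}\,\|d_F^{b+\alpha} v\|_{L^{n/(n-1)}(\Om)}\le \intom d_F^{b+\alpha}F(\nabla v)\,\dx+|b+\alpha|\intom d_F^{b-(1-\alpha)}|v|\,\dx.
\end{equation*}

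To upgrade the exponent from $n/(n-1)$ to $1^{*,\alpha}$, I set $s_1:=\frac{(1-\alpha)(n-1)}{n-1+\alpha}$ and $s_2:=\frac{\alpha n}{n-1+\alpha}$; these satisfy $s_1+s_2=1$, and a direct check of exponents on $d_F$ and $|v|$ (using the identity $(b+\alpha)(1-\alpha)+(b-1+\alpha)\alpha=b$) yields the pointwise factorisation
\begin{equation*}
d_F^{b\cdot 1^{*,\alpha}}|v|^{1^{*,\alpha}}=\bigl(d_F^{(b+\alpha)n/(n-1)}|v|^{n/(n-1)}\bigr)^{s_1}\bigl(d_F^{b-(1-\alpha)}|v|\bigr)^{s_2}.
\end{equation*}
Integrating, applying Hölder with conjugate exponents $1/s_1$ and $1/s_2$, and extracting the $1/1^{*,\alpha}$-root then gives
\begin{equation*}
\|d_F^{b}v\|_{L^{1^{*,\alpha}}(\Om)}\le \|d_F^{b+\alpha}v\|_{L^{n/(n-1)}(\Om)}^{1-\alpha}\,\|d_F^{b-(1-\alpha)}v\|_{L^1(\Om)}^{\alpha}.
\end{equation*}

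Substituting the Sobolev bound from the first step and writing $A:=\intom d_F^{b+\alpha}F(\nabla v)\,\dx$ and $B:=\intom d_F^{b-(1-\alpha)}|v|\,\dx$, the combined estimate becomes $\|d_F^{b}v\|_{L^{1^{*,\alpha}}(\Om)}\le \bigl((A+|b+\alpha|B)/S_{n,F}\bigr)^{1-\alpha}B^{\alpha}$. The parameter $\eps$ enters at the final stage through the weighted AM–GM bound $X^{1-\alpha}Y^{\alpha}\le (1-\alpha)\eta X+\alpha\eta^{-(1-\alpha)/\alpha}Y$ applied with $\eta=\eps^{-\alpha/(1-\alpha)}$ (so that $\eta^{-(1-\alpha)/\alpha}=\eps$); the two constants in \eqref{LocalSobolev} then appear verbatim. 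The degenerate case $\alpha=0$ is already contained in the Sobolev step and needs no Young inequality. I do not foresee a serious obstacle: the creative choice is the Hölder factorisation in the second step, which must be tuned so that its two factors feed exactly into the Sobolev output and into the $B$-integral from the statement; once this alignment is set, the rest is bookkeeping of constants and exponents.
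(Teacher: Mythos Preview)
Your proof is correct and follows essentially the same approach as the paper: the same H\"older factorisation (your exponents $1/s_1,1/s_2$ are exactly the paper's $\frac{1^*}{(1-\alpha)1^{*,\alpha}}$ and $\frac{1}{\alpha\,1^{*,\alpha}}$), the same application of \eqref{Ga-Ni} to $f=d_F^{b+\alpha}v$ using \eqref{Fd}, and the same weighted Young inequality. The only difference is the order in which the three ingredients are invoked, which is immaterial.
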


\begin{proof} Given $v\in\test(\Omega)$, as first step, using H\"older's and Young's inequalities, we can prove the following interpolation estimate
\begin{equation}\label{interpolation-01}
\| d_F^{b} \, v \|_{L^{1^{*,\alpha}}(\Omega)} \leq (1-\alpha)\eps^{-\frac{\alpha}{1-\alpha}} \|d_F^{b+\alpha} v\|_{L^{1^*}(\Omega)}+\eps\alpha \|  d_F^{b-(1-\alpha)}  v \|_{L^{1}(\Omega)} .
\end{equation}
For that, we first observe that the numbers $\frac{1^*}{(1-\alpha)1^{*,\alpha}}$ and $\frac{1}{\alpha \,1^{*,\alpha}}$ are H\"older conjugate.
We can then apply H\"older's inequality with these exponents to the functions $d_F^{(b+\alpha)(1-\alpha)1^{*,\alpha}} |v|^{(1-\alpha) 1^{*,\alpha}}$ and $d_F^{[b-(b+\alpha)(1-\alpha)]1^{*,\alpha}} |v|^{\alpha\, 1^{*,\alpha}}$ and, since
 \[
\frac{b-(b+\alpha)(1-\alpha)}{\alpha} = b-(1-\alpha),
 \]
 we have
 \[
 \|d_F^{b} v \|_{L^{1^{*,\alpha}} (\Omega)}  \leq \|d_F^{b+\alpha} v\|^{1-\alpha}_{L^{1^{*}}(\Omega)} \| d_F^{b-(1-\alpha)}  v\|_{L_1(\Omega)}^{\alpha}.
\]
Now estimate \eqref{interpolation-01} readily follows from Young's inequality
\[
 X^{1-\alpha} Y^{\alpha} \leq (1-\alpha)\eps^{-\frac{\alpha}{1-\alpha}} X + \eps\alpha  Y.
\]

From the anisotropic Gagliardo-Nirenberg inequality \eqref{Ga-Ni} with $f=d_F^{b+\alpha} v$, using the Leibniz rule, the sub-linearity, the homogeneity of $F$ and \eqref{Fd}, we get
\be\label{Ga-Ni-Anis-d}
 S_{n,F}\|d_F^{b+\alpha}v\|_{L^{1^{*}}(\Omega)}\leq \int_{\Omega} d_F^{b+\alpha} F(\nabla v)\,\dx +|b+\alpha|\int_{\Omega}  d_F^{b-(1-\alpha)}|v|\dx .
\ee

The desired inequality \eqref{LocalSobolev} follows combining \eqref{Ga-Ni-Anis-d} and \eqref{interpolation-01}. 
  \end{proof}

\begin{remark}
	Under the same hypotheses of Lemma \ref{lem:pre-sobolev} it can be proved that for any sufficiently smooth bounded domain $V\subset \Omega$ it holds
	\[
	\begin{split}
		\| d_F^{b} \, v \|_{L^{1^{*,\alpha}}(V)}   \leq  & \; \frac{(1-\alpha)}{n\omn^{1/n}\eps^{\frac{\alpha}{1-\alpha}}} \left( C_1
		 \int_{V} d_F^{b+\alpha} F(\nabla v) \dx + \int_{\partial V} d_F^{b+\alpha} |v|\dd\Hnmo \right) \\   + &  \left(\frac{(1-\alpha)|b+\alpha|}{n\omn^{1/n}\eps^{\frac{\alpha}{1-\alpha}}}C_1+\eps\alpha\right)  \int_{V} d_F^{b-(1-\alpha)} |v| \dx,
	\end{split}
	\]
where $C_1$ is the constant from \eqref{Ga-Ni-Anis-bounded}. The proof follows using the same argument as in the proof of Lemma \ref{lem:pre-sobolev} but applying the Sobolev trace inequality \eqref{Ga-Ni-Anis-bounded} instead of \eqref{Ga-Ni}.
\end{remark}

Relying on Lemma \ref{lem:pre-sobolev} we can prove the following Sobolev type inequality.

\begin{theorem}\label{thm-main}
Let $\Om\subset\Rn$ be a domain. Let $\alpha \in [0,1)$ and  $p\in \big[ 1, \frac{n}{1-\alpha} \big)$. Then there exists a constant $C_2:=C_2(n,p,\alpha,F)>0$ such that for any $v\in\test(\Om)$ it holds
\begin{equation}\label{eq:general-sobolev}
	\left\|d_F^{1/p'-\alpha}v\right\|^p_{L^{p^{*,\alpha}}(\Om)} \leq C_2 \intom \left(\frac{|v|^p}{d_{F}}+d_{F}^{p-1} F^p(\nabla v)\right) \dx.
\end{equation}
\end{theorem}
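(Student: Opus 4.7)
The plan is to reduce \eqref{eq:general-sobolev} to Lemma~\ref{lem:pre-sobolev} by applying that lemma not to $v$ itself but to a carefully chosen power of $|v|$. I fix
\[
\gamma := \frac{p^{*,\alpha}}{1^{*,\alpha}} = \frac{p(n-1+\alpha)}{n-p(1-\alpha)}, \qquad b := \Big(\frac{1}{p'}-\alpha\Big)\gamma,
\]
and apply Lemma~\ref{lem:pre-sobolev} to $w=|v|^{\gamma}$. Since $v\in\test(\Omega)$ has compact support and the assumption $p<n/(1-\alpha)$ forces $\gamma\ge 1$, the function $w$ is Lipschitz and compactly supported in $\Omega$; the lemma extends to such $w$ either by density, or by first regularising through $(v^2+\delta^2)^{\gamma/2}$ and sending $\delta\to 0^+$. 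By the $1$-homogeneity and evenness of $F$, the chain rule yields $F(\nabla w)=\gamma|v|^{\gamma-1}F(\nabla v)$.

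With these choices one has $b\cdot 1^{*,\alpha}=(1/p'-\alpha)p^{*,\alpha}$ and $\gamma\cdot 1^{*,\alpha}=p^{*,\alpha}$, so the left-hand side of \eqref{LocalSobolev} becomes exactly $I^{\gamma}$, where
\[
I := \|d_F^{1/p'-\alpha}v\|_{L^{p^{*,\alpha}}(\Omega)}.
\]
Next I will apply H\"older's inequality with conjugate exponents $p,p'$ to each of the two integrals on the right-hand side of \eqref{LocalSobolev}, splitting them as
\[
d_F^{b+\alpha}|v|^{\gamma-1}F(\nabla v)=\bigl(d_F^{(p-1)/p}F(\nabla v)\bigr)\bigl(d_F^{b+\alpha-(p-1)/p}|v|^{\gamma-1}\bigr)
\]
and, analogously, $d_F^{b-(1-\alpha)}|v|^{\gamma}=(d_F^{-1/p}|v|)(d_F^{b-(1-\alpha)+1/p}|v|^{\gamma-1})$. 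The algebraic identity behind the choice of $\gamma$ is $(\gamma-1)p' = \gamma\cdot 1^{*,\alpha}$, equivalently $\gamma(p'-1^{*,\alpha})=p'$; this is precisely where the exponent restriction $p<n/(1-\alpha)$ is used, since it makes $p'-1^{*,\alpha}>0$. Together with $b=(1/p'-\alpha)\gamma$, it forces the remaining factor in each H\"older application to collapse to
\[
\Big(\int_\Omega d_F^{b\cdot 1^{*,\alpha}}|v|^{\gamma\cdot 1^{*,\alpha}} dx\Big)^{1/p'}= I^{p^{*,\alpha}/p'}=I^{\gamma-1}.
\]

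Combining the two Hölder bounds with \eqref{LocalSobolev} applied to $w$ yields
\[
I^{\gamma}\le C_3\Big[\Big(\intom d_F^{p-1}F(\nabla v)^p dx\Big)^{1/p}+\Big(\intom d_F^{-1}|v|^p dx\Big)^{1/p}\Big]\, I^{\gamma-1},
\]
with $C_3=C_3(n,p,\alpha,F)$ obtained by fixing some $\varepsilon>0$ in Lemma~\ref{lem:pre-sobolev}. Since $v\in\test(\Omega)$ makes $I$ finite (and the conclusion is trivial when $I=0$), I may divide by $I^{\gamma-1}$ and raise to the $p$-th power; the elementary inequality $(a+b)^p\le 2^{p-1}(a^p+b^p)$ then delivers \eqref{eq:general-sobolev}. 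The only non-routine part is spotting the correct pair $(\gamma,b)$: once $\gamma=p^{*,\alpha}/1^{*,\alpha}$ is identified as the unique exponent for which the two H\"older applications reproduce the target weighted $L^p$ quantities, and $b$ is pinned down so that the lemma's left-hand side produces $I^\gamma$, the remainder is a self-balancing algebraic verification.
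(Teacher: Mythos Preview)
Your proposal is correct and follows essentially the same route as the paper: apply Lemma~\ref{lem:pre-sobolev} to $|v|^{\gamma}$ with $\gamma=p^{*,\alpha}/1^{*,\alpha}$ and $b=(1/p'-\alpha)\gamma$, then use H\"older's inequality with exponents $p,p'$ on each right-hand integral so that the residual factor is $I^{\gamma-1}$, and divide. The paper's $s$ is your $\gamma$ and the algebraic choices $(s-1)p'=s\cdot 1^{*,\alpha}$, $(b+\alpha-1/p')p'=b\cdot 1^{*,\alpha}$ coincide with yours; your remark that $\gamma\ge 1$ and the regularisation comment are in fact slightly more careful than the paper's ``$s>1$, $b>0$'', which is not literally true at the endpoints.
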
	

\begin{proof}
For $b>0$ and $s>1$ to be chosen later, we start by applying Lemma \ref{lem:pre-sobolev} replacing $v$ by $|v|^s$ in \eqref{LocalSobolev}, to obtain that there exists a constant $\kappa:=\kappa(n,b,\alpha,F)$ such that
\be \label{aux:01}
\kappa \, \left\| d_F^{b} \, v^{s} \right\|_{L^{1^{*,\alpha}}(\Omega)}   \leq   s  \int_{\Omega} d_F^{b+\alpha} |v|^{s-1} F(\nabla v) \dx  + \int_{\Omega}  d_F^{b-(1-\alpha)} |v|^{s} \dx.
\ee
 We next apply H\"{o}lder inequality in both terms of the right hand side to get
\be \label{aux:02}
\begin{split}
 \intom d_F^{b+\alpha} & |v|^{s-1}  F(\nabla v) \dx
   =\intom \left(d_F^{1/p'}F(\nabla v)\right)\left(d_F^{b+\alpha-1/p'}|v|^{s-1}\right)\dd x \\
   & = \left( \intom d_F^{p-1}F^{p}(\nabla v)\dd x\right)^{\frac{1}{p}} \left( \intom d_F^{(b+\alpha-1/p')p'}|v|^{(s-1)p'}\dd x\right)^{\frac{1}{p'}},
\end{split}
\ee
and
\be \label{aux:03}
\begin{split}
 \intom d_F^{b-(1-\alpha)} & |v|^{s}  \dx
   =\intom \left(d_F^{-1/p}|v|\right)\left(d_F^{b-(1-\alpha)+1/p}|v|^{s-1}\right)\dd x \\
   & = \left( \intom \frac{|v|^p}{d_{F}}\dd x\right)^{\frac{1}{p}} \left( \intom d_F^{(b+\alpha-1/p')p'}|v|^{(s-1)p'}\dd x\right)^{\frac{1}{p'}}.
\end{split}
\ee
Now we can choose $s$ and $b$ that solve respectively the equations $(s-1)p' = s \,1^{*,\alpha}$ and $(b+\alpha-1/p')p' = b\,1^{*,\alpha}$  , that is
\[
s=\frac{p^{*,\alpha}}{1^{*,\alpha}}
\;\;\;
\text{and}
\;\;\;
b=\frac{(p(1-\alpha)-1)(n-(1-\alpha))}{n-p(1-\alpha)} = \left( \frac{1}{p'}-\alpha\right)\frac{p^{*,\alpha}}{1^{*,\alpha}}.
\]
With this choice of $s$ and $b$, noting that
\[
 \left( \intom d_F^{(b+\alpha-1/p')p'}|v|^{(s-1)p'}\dd x\right)^{\frac{1}{p'}} =\left \| d_F^{b} \, v^{s} \right\|_{L^{1^{*,\alpha}}(\Omega)}^{\frac{1^{*,\alpha}}{p'}},
\]
we can combine \eqref{aux:01}, \eqref{aux:02} and \eqref{aux:03} to get
\[
\left\| d_F^{b} \, v^{s} \right\|_{L^{1^{*,\alpha}}(\Omega)}   \leq  \frac{s}{\kappa} 2^{\frac{p-1}{p}} \left( \intom \left( \frac{|v|^p}{d_{F}}+d_{F}^{p-1} F^p(\nabla v)\right) dx \right)^{\frac{1}{p}} \left\| d_F^{b} \, v^{s} \right\|^{\frac{1^{*,\alpha}}{p'}}_{L^{1^{*,\alpha}}(\Omega)}.
\]
We conclude the proof by observing that the previous inequality is equivalent to \eqref{eq:general-sobolev} with $C_2=  \left(\frac{s}{\kappa}\right)^{p} 2^{p-1}$ since
\[
1-\frac{1^{*,\alpha}}{p'} = \frac{n-p(1-\alpha)}{np-p(1-\alpha)}= \frac{1^{*,\alpha}}{p^{*,\alpha}}=\frac{1}{s}
\]
and
\[
\left\|d_F^{1/p'-\alpha}v\right\|_{L^{p^{*,\alpha}}(\Om)} = \left( \intom d_F^{(1/p'-\alpha)p^{*,\alpha}}v^{p^{*,\alpha}} \right)^{\frac{1}{p^{*,\alpha}}} = \left\| d_F^{b} \, v^{s} \right\|^{\frac{1}{s}}_{L^{1^{*,\alpha}}(\Omega)}.
\]
 \end{proof}

\begin{remark}\label{rem:Sobolev-Classic}
In the special case $\alpha=0$, the connection of the previous result with the classical Sobolev inequality is easily understood by applying it to the function $d_F^{1/p'}v$. Indeed, for $v\in C^\infty_c(\Omega)$, denoting by $S_{n,p}$ the Sobolev constant, we can write
\[
\begin{split}
 S_{n,p}\left\|d_F^{1/p'}v\right\|_{L^{p^*}(\Omega)}  &  \leq \left\| \nabla(d_F^{1/p'}v) \right\|_{L^{p}(\Omega)} \\
  & \hspace{-1cm} \leq  \frac{1}{p'}\left\|d^{-1/p}_F\nabla d_F\, v\right\|_{L^{p}(\Omega)}  +	\left\| d_F^{1/p'} \nabla v\right\|_{L^{p}(\Omega)} \\
  & \hspace{-1cm} = \frac{1}{p'}\left(\intom d_{F}^{-1}|v|^p |\nabla d_F|^p \dx\right) ^{1/p} + \left(  \intom d_{F}^{p-1} |\nabla v|^p \dx\right)^{1/p}.
\end{split}
\]
Therefore, raising to power $p$ and using the properties of the norm $F$, we get
\[
 \left\|d_F^{1/p'}v\right\|^p_{L^{p^*}(\Omega)} \leq C(n,p,F)  \intom \left(d_{F}^{-1}|v|^p+d_{F}^{p-1} F^p(\nabla v)\right) \dx \hspace{1em} \forall v\in\test(\Om).
\]
This is \eqref{eq:general-sobolev} in the special case $\alpha=0$.
\end{remark}

\begin{remark}\label{rem:Kufner}
In the Euclidean case $F(\cdot)=|\cdot|$ and therefore considering $d$ to be the Euclidean distance, embedding theorems for weighted Sobolev spaces has been widely considered in the literature. In particular we can refer to \cite[Section 8.10]{KJF} for a result closely related to our Theorem \ref{thm-main}. To make this connection more precise we need to introduce some notation on weighted Sobolev spaces. For the sake of simplicity we restrict our treatment to spaces that involve only first order derivatives and we consider a special class of weighted Sobolev spaces. Let $\Omega$ be a domain in $\mathbb{R}^n$, $M\subset \bar\Omega$ with zero Lebesgue measure and let $d(x,M)=\dist(x,M)$. Given, for $\iota$ a multi-index of order 2, $\lambda_\iota=(\lambda_0,\lambda_1) \in \mathbb{R}^2$, we denote with
	\[
	W^{1,p}(\Omega,M,t^\lambda)
	\]
	the Banach space of all functions $u$ defined a.e. in $\Omega$ whose distributional derivatives $\nabla_\iota u$ belong to the weighted Lebesgue space $L_p\big(\Omega, d(x,M)^{\lambda_\iota}\big)$, i.e.
	\[
	\int_{\Omega} |\nabla_\iota u(x)|^p d(x,M)^{\lambda_\iota} \, \dd x < \infty.
	\]
	As in the classical case we define
	\[
	W_0^{1,p}(\Omega,M,t^\lambda) := \overline{\test(\Om)}^{W^{1,p}(\Omega,M,t^\lambda)}.
	\]
	The most common cases in the literature are $M=\partial \Omega$ and $M={x_o}$ with $x_o \in \bar \Omega$. In the first case we can use the shorter notation $W^{1,p}(\Omega,\partial \Omega,t^\lambda)=W^{1,p}(\Omega,d^\lambda)$.
	When $\lambda_0=\lambda_1=s > p-1$, $0<p<\infty$ and $\Omega$ is of class $C^{0,1}$, it can be proved that
	\[
	W^{1,p}(\Omega,d^{\omega_\iota})=W^{1,p}_0(\Omega,d^s) \hookrightarrow L^p(\Omega,d^{s-p}).
	\]
	where $\omega_\iota=(\omega_0,\omega_1)=(s-p,s)$ (see for example \cite[Theorems 8.10.12 and 8.10.14]{KJF}). The limit case $s=p-1$ is more delicate and the characterization of $W_0^{1,p}(\Omega,M,t^\lambda)$ in terms of weighted spaces with distance function is not as clean and it needs a logarithmic correction (see \cite[Remark 8.10.13]{KJF}).
	
	Nevertheless, in a similar spirit, \eqref{eq:general-sobolev} can be seen as a continuous embedding in the limit case $s=p-1$ (and consequently $\omega_\iota=(-1,p-1)$), of the anisotropic weighted Sobolev space $W_0^{1,p}(\Omega,d_F^{\omega_\iota})$ in a Lebesgue space weighted with the anisotropic distance $d_F$. More precisely we can rewrite the result as
	\[
	W_0^{1,p}(\Omega,d_F^{\omega_\iota}) \hookrightarrow L^{p^{*,\alpha}}\Big(\Omega,d_F^{(1/p'-\alpha)p^{*,\alpha}}\Big),
	\]
	and we get the result that can be traced back by carefully analyzing the proof of \cite[Section 4]{FPs}.
\end{remark}

Under suitable assumptions on $\Omega$, we show next how the previous result can be improved. To this end we introduce the following definition.

\begin{definition}\label{def:C-feeble-regularity}
Let $\Om \in \Rn$ be a domain, $\omega\Subset\Om$ and $K>0$. We say that \emph{$\Omega$ is $(\omega,K)$-feeble-regular} if for any $\varphi \in W^{1,\infty}_{c}({\Om\setminus \omega})$ with $\varphi \geq 0$ it holds
\begin{equation}
\label{bound-02}
\left| \int_{\Om\setminus \omega} \frac{1}{2}\nabla F^{2}(\nabla d_{F}) \cdot \nabla (\varphi d_{F}) \dd x \right| \leq K  \|\varphi\|_{L^1({\Om\setminus \omega})}.
\end{equation}
\end{definition}
\begin{remark}
The previous definition can be read as a weak-form of the assumption that $|d_F \Delta_F d_F|$ is bounded by the constant $K$ in the set $\Om \setminus \omega$.
\end{remark}

\begin{proposition} \label{cor_01}
Let $\Om\subset\Rn$ be a domain. Let $\alpha \in [0,1)$, $p\in \big[ 1, \frac{n}{1-\alpha} \big)$ and let
\be \label{def-a}
 a:= \frac{(p-1)(1-\alpha)(n-1)}{n-p(1-\alpha)}.
\ee
Suppose there exists $\delta >0$ and $\omega\Subset\Om$ such that $\Om$ is $(\omega,a-\delta)$-feeble-regular. Then for $0< \epsilon < \dist(\omega,\partial \Omega)$, there exists a constant $C_3=C_3(n,p,\alpha,F,\omega,\epsilon)>0$ such that
\[
 C_3 \|d_F^{1/p'-\alpha}v\|^p_{L^{p^{*,\alpha}}(\Om)}
  \leq
    \intom d_{F}^{p-1}F^p(\nabla v) \dx+\frac{1}{\epsilon}\int_{\omega_\epsilon\setminus\omega}d_F^{p-1}|v|^p \dx,
\]
for all $v\in\test(\Om)$, where $\omega_\epsilon:=\omega + B(0,\epsilon)$.
\end{proposition}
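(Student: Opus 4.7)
The plan is to follow the scheme of the proof of Theorem \ref{thm-main} but to improve the estimate of the ``bad'' term in Lemma \ref{lem:pre-sobolev} using the feeble-regularity hypothesis. Set $s=p^{*,\alpha}/1^{*,\alpha}$ and $b=(1/p'-\alpha)s$ exactly as in the proof of Theorem \ref{thm-main}. A direct computation gives the key algebraic identity $b+\alpha=a$, so that, assuming $v\geq 0$ (which is no loss), the last term in Lemma \ref{lem:pre-sobolev} applied with $v\to v^s$ is precisely $\int d_F^{a-1}v^s\,\dx$. The factor $a$ that arises when one differentiates $d_F^a$ in an integration by parts is then exactly the constant that the $(\omega,a-\delta)$-feeble-regularity allows one to cancel, leaving the margin $\delta$.

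Choose a Lipschitz cutoff $\chi$ with $\chi\equiv 0$ on $\overline{\omega}$, $\chi\equiv 1$ on $\Omega\setminus\omega_\epsilon$, $0\leq\chi\leq 1$ and $F(\nabla\chi)\leq C/\epsilon$. Then $\varphi=\chi^p d_F^{a-1}v^s$ is admissible in \eqref{bound-02}. Expanding $\nabla(\varphi d_F)=\nabla(\chi^p d_F^a v^s)$, pairing with $F_\xi(\nabla d_F)$, and using $F_\xi(\nabla d_F)\cdot\nabla d_F=1$ together with $|F_\xi(\nabla d_F)\cdot\eta|\leq F(\eta)$, the feeble-regularity inequality rearranges to
\[
\delta\int \chi^p d_F^{a-1}v^s \leq p\int\chi^{p-1}d_F^a v^s F(\nabla\chi)+s\int\chi^p d_F^a v^{s-1}F(\nabla v).
\]
Since $\nabla\chi$ is supported in $\omega_\epsilon\setminus\omega$ where $d_F$ is bounded, the first right-hand term is dominated by $(C(\omega,\epsilon)/\epsilon)\int_{\omega_\epsilon\setminus\omega}v^s$. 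For the complementary piece, $\int(1-\chi^p)d_F^{a-1}v^s$ is supported in $\omega_\epsilon$ where $d_F^{a-1}$ is bounded, so it is controlled by $C(\omega,\epsilon)\int_{\omega_\epsilon}v^s$. Inserting these two bounds into Lemma \ref{lem:pre-sobolev} applied with $v\to v^s$ gives
\[
\kappa\|d_F^b v^s\|_{L^{1^{*,\alpha}}}\leq C\int d_F^a v^{s-1}F(\nabla v)+\frac{C(\omega,\epsilon)}{\epsilon}\int_{\omega_\epsilon}v^s.
\]

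Next I would apply the two H\"older decompositions used in the proof of Theorem \ref{thm-main}: the gradient term via $(d_F^{1/p'}F(\nabla v))(d_F^{a-1/p'}v^{s-1})$, and the remainder via $(d_F^{1/p'-a}v)(d_F^{a-1/p'}v^{s-1})$. Both produce the factor $\|d_F^b v^s\|_{L^{1^{*,\alpha}}}^{1^{*,\alpha}/p'}$ thanks to the identity $(b+\alpha-1/p')p'=b\cdot 1^{*,\alpha}$ already exploited in Theorem \ref{thm-main}, while the primal factor of the second decomposition reduces, since $d_F^{p-1-ap}$ is bounded on $\omega_\epsilon$, to a constant multiple of $[\int_{\omega_\epsilon}v^p]^{1/p}$. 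Absorbing the common factor $\|d_F^b v^s\|_{L^{1^{*,\alpha}}}^{1^{*,\alpha}/p'}$ to the left, invoking the identities $1-1^{*,\alpha}/p'=1/s$ and $\|d_F^b v^s\|_{L^{1^{*,\alpha}}}^{1/s}=\|d_F^{1/p'-\alpha}v\|_{L^{p^{*,\alpha}}}$ from the proof of Theorem \ref{thm-main}, and raising to the $p$-th power leaves
\[
\|d_F^{1/p'-\alpha}v\|^p_{L^{p^{*,\alpha}}}\leq C(\omega,\epsilon)\Big(\int d_F^{p-1}F^p(\nabla v)+\tfrac{1}{\epsilon}\int_{\omega_\epsilon}v^p\Big).
\]

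The main obstacle is to trade the integral $\int_{\omega_\epsilon}v^p$, which spills into the inner set $\omega$, for the purely annular $\int_{\omega_\epsilon\setminus\omega}d_F^{p-1}|v|^p$ prescribed by the Proposition. I would handle this with a Poincar\'e-type inequality on the connected bounded set $\omega_\epsilon$ with anchor on the annulus $\omega_\epsilon\setminus\omega$ (obtained by the usual argument of subtracting the mean of $v$ on the anchor),
\[
\int_{\omega_\epsilon}v^p\leq C(\omega,\epsilon)\Big(\int_{\omega_\epsilon\setminus\omega}v^p+\int_{\omega_\epsilon}|\nabla v|^p\Big),
\]
and then the equivalences $d_F\simeq 1$ and $|\cdot|\simeq F(\cdot)$ on $\omega_\epsilon$ rewrite the right-hand side as a constant multiple of $\int_{\omega_\epsilon\setminus\omega}d_F^{p-1}v^p+\int_\Omega d_F^{p-1}F^p(\nabla v)$. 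Plugging back, the gradient contribution is absorbed into the first term of the Proposition and the annular contribution inherits precisely the weight $1/\epsilon$ coming from the cutoff bound $F(\nabla\chi)\leq C/\epsilon$, giving the claim.
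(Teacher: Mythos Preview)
Your argument is correct, but it follows a genuinely different route from the paper's.

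\medskip
\noindent\textbf{How the paper proceeds.} The paper first proves a \emph{linear} ($s=1$) estimate and only afterwards substitutes $v\mapsto|v|^s$. It splits $v=\phi v+(1-\phi)v$ at the level of the $L^{1^{*,\alpha}}$ norm. For the outer piece $\phi v$ it applies Lemma~\ref{lem:pre-sobolev} and then the feeble-regularity exactly as you do, obtaining $\int_{\Om\setminus\omega}d_F^{a}F(\nabla(\phi v))$. For the inner piece $(1-\phi)v$, which is compactly supported in $\overline{\omega_\epsilon}$, it applies H\"older together with the Gagliardo--Nirenberg inequality \eqref{Ga-Ni} directly; this produces a \emph{gradient} term $\int_{\omega_\epsilon}d_F^{a}F\big(\nabla((1-\phi)v)\big)$ on all of $\omega_\epsilon$, including~$\omega$. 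Summing, one gets
\[
\kappa_5\|d_F^{b}v\|_{L^{1^{*,\alpha}}(\Om)}\le\intom d_F^{a}F(\nabla v)\,\dx+\frac{1}{\epsilon}\int_{\omega_\epsilon\setminus\omega}d_F^{a}|v|\,\dx,
\]
and only then replaces $v$ by $|v|^s$; the annular term then H\"olders straight to $\big(\int_{\omega_\epsilon\setminus\omega}d_F^{p-1}|v|^p\big)^{1/p}\|d_F^{b}|v|^s\|_{L^{1^{*,\alpha}}}^{1^{*,\alpha}/p'}$ with no further work.

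\medskip
\noindent\textbf{What your route does differently.} You go to $|v|^s$ immediately, split only the scalar term $\int d_F^{a-1}|v|^s$, and after H\"older you are left with $\int_{\omega_\epsilon}|v|^p$ over the \emph{full} set $\omega_\epsilon$; you then invoke a Poincar\'e--Friedrichs inequality anchored on the annulus to trade this for the gradient plus the annular integral. This is fine, but note that nothing is assumed about $\omega$ beyond $\omega\Subset\Om$, so $\omega_\epsilon$ need not be a Poincar\'e domain. The clean fix is to apply the anchored Poincar\'e inequality on a smooth bounded connected $U$ with $\overline{\omega_\epsilon}\subset U\Subset\Om$ (such $U$ exists because $\Om$ is a domain) and then use $d_F\simeq 1$ on $U$; you should say this explicitly. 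In short, the paper's approach stays within the tools already on the table (Lemma~\ref{lem:pre-sobolev} and \eqref{Ga-Ni}) and never sees a Poincar\'e inequality, while your approach is more ``head-on'' but imports one extra ingredient and needs the domain enlargement just mentioned.
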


\begin{proof}
Consider a cut-off function $\phi : \Omega \to [0,1]$ such that $\phi(x)= 1$ for any $x\in \Omega \setminus \overline{\omega_\epsilon}$, $\phi(x)=0$ for  any $x\in \omega$ and $| \nabla \phi |_\infty \leq \kappa_1/\epsilon$ for a constant $\kappa_1>0$. We define $b$ as in the proof of Theorem \ref{thm-main}; that is
\[
b=\frac{(p(1-\alpha)-1)(n-(1-\alpha))}{n-p(1-\alpha)},
\]
and observe that $a=b+\alpha$.
   For $v \in \test(\Om)$, writing $v= \phi v + (1-\phi) v$, we have
\begin{equation}\label{eq:000}
 \|d^{b}_Fv\|_{L^{1^{*,\alpha}}(\Omega)}  \leq  \|d^{b}_F\phi v\|_{L^{1^{*,\alpha}}(\Omega \setminus \omega)}+\|d^{b}_F (1-\phi)v\|_{L^{1^{*,\alpha}}(\omega_\epsilon)}.
\end{equation}

Let us start by considering the first term of the right hand side of \eqref{eq:000}.  Using Lemma \ref{lem:pre-sobolev} we can easily deduce the existence of a constant $\kappa_{2}=\kappa_{2}(n,p,b,F)$ such that
\begin{equation}\label{eq:001}
\kappa_{2} \|d^{b}_F\phi v\|_{L^{1^{*,\alpha}}(\Omega \setminus \omega)}
  \leq
    \int_{\Omega \setminus \omega}d_F^{a}F(\nabla (\phi v))\dx +  \int_{\Omega  \setminus \omega}  d_F^{a-1} |\phi v| \dx.
\end{equation}
Now we want to estimate the last term of the right hand side of the previous inequality. We use the fact that
	$F_{\xi}(\nabla d_F) \cdot \nabla d_F=F(\nabla d_F)=1$ and the assumptions on $\Omega$ to get
	\begin{equation*}
	\begin{split}
	\int_{\Omega \setminus \omega}  d_F^{a-1}  |\phi v| \dx  = &  \frac{1}{a}  \int_{\Omega \setminus \omega}  |\phi  v|  F_{\xi}(\nabla d_F) F(\nabla d_F) \cdot \nabla d^{a}_F  \dx \\
	= &  \frac{1}{a}  \int_{\Omega \setminus \omega}   F_{\xi}(\nabla d_F) F(\nabla d_F) \cdot \nabla ( d^{a}_F  | \phi v| )\dx \\
	& ~~~~ -\frac{1}{a}  \int_{\Omega \setminus \omega} d^{a}_F   F_{\xi}(\nabla d_F) F(\nabla d_F) \cdot   \nabla ( |\phi v| ) \dx  \\
	\leq &  \frac{a-\delta}{a}  \int_{\Omega \setminus \omega}  d^{a-1 }_F  |\phi  v| \dx +  \frac{1}{a}  \int_{\Omega \setminus \omega} d^{a}_F    F( \nabla  |\phi v| ) \dx
	\end{split}
	\end{equation*}
	Where in last term we have used \eqref{F-young} and \eqref{eq:F1} and the $(\omega,a-\delta)$-feeble-regulaliry of $\Om$ (by applying \eqref{bound-02} with $\varphi= d_F^{a -1} \phi v$). In view of the previous estimate we can write
	\[
	\int_{\Omega \setminus \omega}  d^{a-1 }_F  |\phi v| \dx  \leq \frac{1}{\delta}  \int_{\Omega \setminus \omega} d^{a}_F    F( \nabla  \phi v ) \dx
	\]
	that together with \eqref{eq:001} gives
\be
\label{eq:002}
\begin{split}
 \frac{\delta \kappa_{2}}{1+\delta}   \|d^{b}_F\phi v\|_{L^{1^{*,\alpha}}(\Omega \setminus \omega)}
   \leq  &  \int_{\Om \setminus \omega}d_F^{a}F(\nabla (\phi v))\dx =  \int_{\Om \setminus \omega_\epsilon}d_F^{a}F(\nabla v)\dx
  \\ &  +  \int_{\omega_\epsilon \setminus \omega}d_F^{a}F( \phi \nabla v + v \nabla \phi)\dx.
\end{split}
\ee

Now we take into account the second term of the right hand side of \eqref{eq:000}. Since $\overline{\omega_\epsilon}\subset \Omega$ is compact, we can use H\"older's inequality and \eqref{Ga-Ni}  to get
\begin{align}\nonumber
 \|d^{b}_F (1-\phi)v\|_{L^{1^{*,\alpha}}(\omega_\epsilon)}
  &\leq\|d_F ^{b 1^{*,\alpha}}\|^{1/1^{*,\alpha}}_{L^{\left(\frac{1^*}{1^{*,\alpha}}\right)'}(\omega_\epsilon)}\| (1-\phi)v\|_{L^{\frac{n}{n-1}}(\omega_\epsilon)}
   \\ \nonumber & \leq  \kappa_{3}  \int_{\omega_\epsilon}d_F^{a}F\big(\nabla ((1-\phi) v)\big)\dx,
\end{align}
with a constant $\kappa_{3}=\kappa_{3}\big(n,p,b,F,\dist(\omega,\partial\Om),\mathrm{diam}(\omega),\epsilon\big)$. Here we have also used the fact that in $\omega_\epsilon$, $d_F$ is bounded above, and also below away from $0$. Then, by the properties of $\phi$,
\be \label{eq:003}
\begin{split}
 \frac1\kappa_3 \! \|d^{b}_F (1-\phi)v\|_{L^{1^{*,\alpha}}\!(\omega_\epsilon)}
  & \!  \leq \! \int_{ \omega}\|d_F^{a}F(\nabla v)\dx\! + \! \int_{\omega_\epsilon \setminus \omega} \!d_F^{a}F\big( \nabla((1-\phi) v)\big)\dx
   \\  & \leq\!
    \int_{ \omega_\epsilon}\!d_F^{a}F(\nabla v)\dx \!+\!  \int_{\omega_\epsilon \setminus \omega}\!d_F^{a}F( \phi\nabla v- v \nabla \phi) \dx.
\end{split}
\ee
Let us estimate the last integral.  Again from the hypotheses on $\phi$ and by homogeneity and convexity properties of $F$ we get
	\begin{equation}\label{eq:004}
	\int_{\omega_\epsilon \setminus \omega}d_F^{a}F( \phi\nabla v- v \nabla \phi) \dx   \leq   \int_{\omega_\epsilon \setminus \omega}d_F^{a}F( \nabla v) \dx +  \int_{\omega_\epsilon \setminus \omega}d_F^{a}|v| F(\nabla \phi) \dx. 	
	\end{equation}
Using the uniform bound on the gradient of $\phi$, we have $|F(\nabla \phi)| \leq \kappa_{4}/\epsilon$ for a constant $\kappa_{4}>0$, and putting together \eqref{eq:000}, \eqref{eq:002}, \eqref{eq:003}, \eqref{eq:004}
\[
 \kappa_{5}\|d^{b}_F v\|_{L^{1^{*,\alpha}}(\Omega)}
  \leq
   \intom d_F^{a}F(\nabla v) \dx
	+\frac{1}{\epsilon}\int_{ \omega_\epsilon \setminus \omega }d_F^{a}|v| \dx,
\]
with the constant $\kappa_{5}=\kappa_{5}(n,p,b,F,\omega,\epsilon)>0$. Replacing in the previous inequalities $v$ by $|v|^s$ with
\[
 s= \frac{p^{*,\alpha}}{1^{*,\alpha}},
\]
we get
\[
 \kappa_5 \|d^{b}_F|v|^s\|_{L^{1^{*,\alpha}}(\Omega)}
  \leq
   s\intom d_F^{b+\alpha}|v|^{s-1}F(\nabla v) \dx+\frac{1}{\epsilon}\int_{ \omega_\epsilon \setminus \omega }d_F^{b+\alpha}|v|^s \dx.
\]
Applying H\"{o}lder's inequality in both terms of the right hand side, we conclude arguing exactly as in the end of the proof of Theorem \ref{thm-main}.
 \end{proof}

\begin{remark}\label{rem:reg}
	We point out that the analytical assumption of the previous result can be linked to regularity conditions on the domain $\Omega$. Indeed, it is easy to verify that if $\Omega$ is bounded and $\partial \Omega$ is sufficiently smooth (for example of class $C^2$) then $|d_F \Delta_F d_F|$ is bounded in a sufficiently small neighborhood of the boundary. Then of course the estimate in Proposition \ref{cor_01} holds true with a constant $C(n,p,\alpha,F,\Om)$.
\end{remark}

\begin{remark}\label{rem:half_plane}
	We remark that the result of Proposition \ref{cor_01} can be sharpened as soon as $\Omega$ is such that $\Delta_{F} d_F=0$, as for instance in the case of the half-space. Indeed in this case we can choose $\omega=\emptyset$ for any choice of $\delta$. This allows us to say that
	\[
	C(n,p,\alpha,F) \left\|d_F^{1/p'-\alpha}v\right\|^p_{L^{p^{*,\alpha}}(\Omega)}
	\leq
	\int_{\Omega} d_{F}^{p-1}F^p(\nabla v) \dx \hspace{1em} \forall~v\in\test(\Omega).
	\]
	The previous inequality generalizes to the anisotropic setting the results obtained in \cite{FPs} for the half-space in the isotropic case.
\end{remark}

Another useful consequence of Theorem \ref{thm-main} is the following corollary that extends \cite[Proposition 4.1]{FPs} to the anisotropic case.

\begin{corollary}\label{cor_02}
	Under the assumptions of Proposition \ref{cor_01}, suppose in addition that $ r_\Omega < \infty$ and
	\begin{equation}
	\label{H1}
	-\Delta_{F}d_{F} \ge 0 \quad\text{in }\mathcal{D}'(\Om).
	\end{equation}
Then there exists a constant $C_4=C_4(n,p,\alpha,F,\omega,\epsilon,r_\Omega)>0$ such that for any $v\in\test(\Om)$ we have
	\[
	C_4\|d_F^{1/p'-\alpha}v\|^{p}_{L^{p^{*,\alpha}}(\Omega)}
	\leq\intom d_F^{p-1}F(\nabla v)^p~\dd x
	+
	\intom F_\xi(\nabla d_F)\cdot\nabla (|v|^p)\dd x.
	\]
	\end{corollary}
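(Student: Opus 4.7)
The strategy is to absorb the interior correction term of Proposition \ref{cor_01} into the nonnegative contribution supplied by hypothesis \eqref{H1}. Since $\omega_\epsilon\Subset\Omega$ and $r_\Omega<\infty$, on $\omega_\epsilon\setminus\omega$ one has $d_F\le r_\Omega$, so
\[
\frac{1}{\epsilon}\int_{\omega_\epsilon\setminus\omega} d_F^{p-1}|v|^p\,\dd x \le \frac{r_\Omega^{p-1}}{\epsilon}\int_\Omega |v|^p\,\dd x.
\]
Hence, setting $I:=\int_\Omega d_F^{p-1}F(\nabla v)^p\,\dd x$ and $J:=\int_\Omega F_\xi(\nabla d_F)\cdot\nabla(|v|^p)\,\dd x$, it suffices to prove $\int_\Omega|v|^p\,\dd x\le C(p,r_\Omega)(I+J)$ for $v\in\test(\Omega)$.

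For this, I would exploit the product-rule identity coming from $F_\xi(\nabla d_F)\cdot\nabla d_F=F(\nabla d_F)=1$ a.e.:
\[
\int_\Omega |v|^p\,\dd x = \int_\Omega F_\xi(\nabla d_F)\cdot\nabla(d_F|v|^p)\,\dd x - \int_\Omega d_F\, F_\xi(\nabla d_F)\cdot\nabla(|v|^p)\,\dd x.
\]
The first term on the right represents $\langle -\Delta_F d_F,\,d_F|v|^p\rangle$ with $-\Delta_F d_F$ regarded as a nonnegative Radon measure by \eqref{H1}; since $d_F|v|^p\le r_\Omega|v|^p$ pointwise and the measure is nonnegative, monotonicity gives
\[
\langle -\Delta_F d_F,\,d_F|v|^p\rangle \le r_\Omega\,\langle -\Delta_F d_F,\,|v|^p\rangle = r_\Omega J.
\]
For the second term of the identity, the Fenchel-type inequality \eqref{F-young} and \eqref{eq:F1} (which gives $F^o(F_\xi(\nabla d_F))=1$) combined with the homogeneity and evenness of $F$ yield $|F_\xi(\nabla d_F)\cdot\nabla(|v|^p)|\le F(\nabla|v|^p)=p|v|^{p-1}F(\nabla v)$, whence
\[
\left|\int_\Omega d_F\,F_\xi(\nabla d_F)\cdot\nabla(|v|^p)\,\dd x\right| \le p\int_\Omega d_F\,|v|^{p-1}F(\nabla v)\,\dd x.
\]

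To close the estimate I would split $d_F|v|^{p-1}F(\nabla v)=\bigl(d_F^{(p-1)/p}F(\nabla v)\bigr)\bigl(d_F^{1/p}|v|^{p-1}\bigr)$ and apply Young's inequality with a parameter $\eta>0$; the second factor raised to $p'$ produces a weight $d_F^{1/(p-1)}\le r_\Omega^{1/(p-1)}$, leading to
\[
p\int_\Omega d_F|v|^{p-1}F(\nabla v)\,\dd x \le \eta\,I + C(\eta,p,r_\Omega)\int_\Omega |v|^p\,\dd x.
\]
Choosing $\eta$ large enough that $C(\eta,p,r_\Omega)\le 1/2$ and absorbing into the left side gives $\int_\Omega|v|^p\,\dd x\le C(p,r_\Omega)(I+J)$; substitution back into the estimate of Proposition \ref{cor_01} then yields the claim with $C_4=C_4(n,p,\alpha,F,\omega,\epsilon,r_\Omega)$. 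The limit case $p=1$ is simpler and bypasses Young: one has directly $\int_\Omega d_F\,F(\nabla v)\,\dd x\le r_\Omega I$.

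The main technical hurdle is justifying the use of \eqref{H1} against the test function $d_F|v|^p$ (and similarly $|v|^p$), which lies in $W^{1,\infty}_c(\Omega)$ but not in $\test(\Omega)$. Since $v\in\test(\Omega)$ has compact support, so does $d_F|v|^p$, and mollification by nonnegative kernels produces approximating functions $\varphi_\delta\in\test(\Omega)$ with $\varphi_\delta\ge 0$ and $\nabla\varphi_\delta\to\nabla(d_F|v|^p)$ in $L^1(\Omega)$; since $F_\xi(\nabla d_F)\in L^\infty(\Omega)$, the nonnegativity in \eqref{H1} passes to the limit. This approximation step is standard but essential, and delicate in that one must simultaneously preserve sign and compact support inside $\Omega$.
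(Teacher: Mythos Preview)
Your proposal is correct and follows essentially the same route as the paper: invoke Proposition~\ref{cor_01}, then control the interior remainder $\int_{\omega_\epsilon\setminus\omega}d_F^{p-1}|v|^p\,\dd x$ by an integration-by-parts argument based on the identity $F_\xi(\nabla d_F)\cdot\nabla d_F=1$, exploiting \eqref{H1} together with $d_F\le r_\Omega$ to bound the $(-\Delta_F d_F)$-contribution by a multiple of $J$, and closing with Young's inequality and absorption. The only cosmetic difference is that the paper estimates $\int_\Omega d_F^{p-1}|v|^p\,\dd x$ directly (writing $d_F^{p-1}=\tfrac1p F_\xi(\nabla d_F)\cdot\nabla d_F^{\,p}$ and integrating by parts against $d_F^{\,p}$), whereas you first discard the weight via $d_F^{p-1}\le r_\Omega^{p-1}$ and then run the same scheme with $d_F$ in place of $d_F^{\,p}$; your variant has the minor advantage of treating $p=1$ without a separate limiting argument, and you make explicit the approximation step needed to test \eqref{H1} against $W^{1,\infty}_c$ functions, which the paper leaves implicit.
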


\begin{proof}
	
	We use Proposition \ref{cor_01} and  we need to estimate the term
	\[
	\int_{\omega_\epsilon\setminus\omega}d_F^{p-1}|v|^p \dx.
	\]
	Recalling that $F_{\xi}(\nabla d_F) \cdot \nabla d_F=1$ a.e., integrating by parts, using \eqref{H1}, \eqref{F-young} and \eqref{eq:F1} we have
	\[
	\begin{split}
	\int_{\omega_\epsilon\setminus\omega}d_F^{p-1}|v|^p~dx &\leq \int_{\Omega}d_F^{p-1}|v|^p~\dd x =
	\frac{1}{p-1}\int_{\Omega}F_{\xi}(\nabla d_F) \cdot \nabla d_F^{p} |v|^p~\dx
	\\  & \hspace{-5em}  =
	-\frac{p}{p-1}\int_{\Omega}d_F^{p}|v|^{p-1}F_{\xi}(\nabla d_F) \cdot \nabla |v| \dx+\frac{1}{p-1}\int_{\Omega} d_F^{p}|v|^{p} (-\Delta_F d_F)\dx
	\\  & \hspace{-5em}  \leq
	\frac{p}{p-1}\int_{\Omega}d_F^{p}|v|^{p-1}F(\nabla |v|)+ \frac{r_\Omega^{p}}{p-1}\int_{\Omega} |v|^{p} (-\Delta_F d_F)\dx.
	\end{split}
	\]
	Now we consider the first term of the last inequality, using the Young's inequality and since $d_F<r_{\Omega}$, we get
	\begin{equation*}
	\begin{split}
	\int_{\Omega}d_F^{p}|v|^{p-1}F(\nabla |v|)\dx & \leq  r_{\Omega} \int_{\Omega}d_F^{p-1}|v|^{p-1}F(\nabla |v|)\dx
	\\
	&\leq
	r_{\Omega}\Big(\kappa(\epsilon)\int_{\Omega} F^p(\nabla |v|)d_F^{p-1}\dx +\epsilon \int_{\Omega} |v|^pd^{p-1}_F \dx\Big),
	\end{split}
	\end{equation*}
	Combining the previous estimates we have
	\begin{equation*}
	\kappa(p,r_{\Omega})\int_{\Omega}d_F^{p-1}|v|^p~\dd x \leq \int_{\Omega} F^p(\nabla |v|)d_F^{p-1} \dx+\int_{\Omega} |v|^{p} (-\Delta_F d_F)\dx
	\end{equation*}
	and the result follows.
 \end{proof}

\begin{remark}\label{rem:boundary_Geometry}
	There is a strict connection with assumption \eqref{H1} and the \emph{mean-convexity} property of $\Omega$. In the Euclidean case it is well known that, under suitable regularity condition on $\partial \Omega$ (which includes the one in Remark \ref{rem:reg}), the domain $\Omega$ is mean-convex (i.e. its mean curvature is non negative at any point of $\partial \Omega$) if and only if $-\Delta d \ge 0$ in $\mathcal{D}'(\Om)$ (cf. \cite{GP,Gr,LLL,Ps}).
	We stress that even if $F$ is equivalent to the Euclidean norm, condition $-\Delta d \ge0$ is not equivalent to the condition \eqref{H1} as proved in \cite{DdBG}. Questions on the link between \eqref{H1} and the geometric condition on $\Omega$ in the anisotropic setting are one of the topics that will be treated in a forthcoming paper of F. Della Pietra, G. di Blasio, N. Gavitone, G. Pisante and G. Psaradakis.
\end{remark}
\section{Some consequences}
In this section we apply the weighted anisotropic Sobolev inequality of \S3 to improve the anisotropic Hardy inequality. Several anisotropic versions of the Hardy inequality, with different singular weights, are known; see \cite{AFMTV,Ba,BF,DdBG,MST,Vn}. Here we focus on the case where the singular weight is a negative power of $d_F$ and prove a Hardy-Sobolev and a Hardy-Morrey inequality.
\subsection{Anisotropic Hardy-Sobolev inequality}
We start by proving, for the sake of completeness, the following lemma that quantifies the classical convexity inequality for $F^p$ (cfr. for instance \cite[Appendix]{Ln} for the Euclidean case).

\begin{lemma}
Let $F\in C^{2}(\Rn \setminus\{0\})$, even, positively $1$-homogenous and strongly convex, i.e. $\nabla^{2}F^{2}(\xi)$ is a positively defined matrix for any $\xi\not =0$.
Then for $p\ge 2$, there exists $\sigma_F\geq1$ such that
\begin{equation}\label{convex}
 F^p(\xi_1+\xi_2)\ge F^p(\xi_1)+\frac{2^{1-p}}{\sigma^p_F}F^p(\xi_2)+pF^{p-1}(\xi_1)F_{\xi}(\xi_1)\cdot \xi_2\hspace{1em}\forall~\xi_1,\xi_2\in\Rn.
\end{equation}
\end{lemma}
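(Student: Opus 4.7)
The plan is to establish the inequality by a second-order Taylor expansion of $F^{p}$ along the segment from $\xi_{1}$ to $\xi_{1}+\xi_{2}$, combined with a uniform lower bound on $\nabla^{2}F^{p}$ derived from the strong convexity of $F^{2}$. First I would record the norm equivalence that strong convexity buys: since $\nabla^{2}F^{2}$ is $0$-homogeneous, continuous, and positive definite on $\mathbb{R}^{n}\setminus\{0\}$, compactness of the unit Euclidean sphere yields constants $0<\lambda\le\Lambda<\infty$ with $\lambda I\preceq\nabla^{2}F^{2}(\xi)\preceq\Lambda I$ for all $\xi\neq 0$. Euler's identity for the $2$-homogeneous function $F^{2}$ gives $F^{2}(\xi)=\tfrac{1}{2}\xi^{T}\nabla^{2}F^{2}(\xi)\xi$, whence $\sqrt{\lambda/2}\,|\xi|\le F(\xi)\le\sqrt{\Lambda/2}\,|\xi|$. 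I would then set $\sigma_{F}:=\sqrt{\Lambda/\lambda}\ge 1$, allowing for later enlargement.

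Next I would compute the Hessian of $F^{p}$. Starting from $\nabla F^{p}=pF^{p-1}F_{\xi}$ and using the identity $\nabla^{2}F^{2}=2F_{\xi}\otimes F_{\xi}+2F\nabla^{2}F$ to eliminate $\nabla^{2}F$, a short calculation yields
\[
\nabla^{2}F^{p}(\xi)=pF^{p-2}(\xi)\Bigl[(p-2)F_{\xi}(\xi)\otimes F_{\xi}(\xi)+\tfrac{1}{2}\nabla^{2}F^{2}(\xi)\Bigr].
\]
For $p\ge 2$ the rank-one term in the bracket is positive semidefinite, so $\nabla^{2}F^{p}(\xi)\succeq\tfrac{p\lambda}{2}F^{p-2}(\xi)\,I$. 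Taylor's theorem with integral remainder, using $\nabla F^{p}(\xi_{1})\cdot\xi_{2}=pF^{p-1}(\xi_{1})F_{\xi}(\xi_{1})\cdot\xi_{2}$, then gives
\[
F^{p}(\xi_{1}+\xi_{2})-F^{p}(\xi_{1})-pF^{p-1}(\xi_{1})F_{\xi}(\xi_{1})\cdot\xi_{2}\ge\tfrac{p\lambda}{2}|\xi_{2}|^{2}\int_{0}^{1}(1-t)F^{p-2}(\xi_{1}+t\xi_{2})\,dt.
\]

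The remaining task is to bound the integral from below by $c_{p}F^{p-2}(\xi_{2})$. Using the reverse triangle inequality $F(\xi_{1}+t\xi_{2})\ge|F(\xi_{1})-tF(\xi_{2})|$, valid because $F$ is an even gauge, and setting $r:=F(\xi_{1})/F(\xi_{2})$, the matter reduces to a uniform lower bound $\int_{0}^{1}(1-t)|r-t|^{p-2}\,dt\ge c_{p}>0$ in $r\ge 0$; this follows from a short case analysis (on whether $r\le 1/2$, $1/2<r\le 1$, or $r>1$) by isolating a sub-interval of $[0,1]$ of definite length on which $|r-t|\gtrsim 1/4$. Combining with $|\xi_{2}|^{2}\ge 2F^{2}(\xi_{2})/\Lambda$ produces a lower bound of the form $CF^{p}(\xi_{2})$ with $C=C(p,\lambda,\Lambda)>0$, and enlarging $\sigma_{F}$ if necessary delivers the stated constant $2^{1-p}/\sigma_{F}^{p}$. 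The main obstacle is precisely this final bookkeeping to match the Euclidean-like factor $2^{1-p}$: the direct Hessian--Taylor route produces a constant of shape $c_{p}/\sigma_{F}^{p}$ with $c_{p}$ depending on $p$, which is qualitatively right but not literally of the stated form. A cleaner alternative is to apply the classical scalar inequality $|x+y|^{p}\ge|x|^{p}+2^{1-p}|y|^{p}+p|x|^{p-2}x\cdot y$ for $p\ge 2$ as a black box and transfer it through the norm equivalence, which immediately yields a constant of the desired form at the cost of being less self-contained.
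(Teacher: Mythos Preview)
Your Taylor-expansion approach is correct and does prove the lemma, but it is a genuinely different route from the paper's. The paper argues via Banach-space uniform convexity: from the positive-definite, $0$-homogeneous Hessian $\nabla^2F^2$ it extracts the $2$-uniform convexity inequality
\[
F^2\Big(\frac{\xi_1+\xi_2}{2}\Big)+\frac{1}{\sigma_F^2}F^2\Big(\frac{\xi_1-\xi_2}{2}\Big)\le\frac12F^2(\xi_1)+\frac12F^2(\xi_2),
\]
with the explicit, $p$-independent $\sigma_F^2=\max_{v,w\in\mathbb{S}^{n-1}}\nabla^2F^2[v](v,v)/\nabla^2F^2[w](v,v)$, then raises to the power $p/2$ to obtain $p$-uniform convexity, and finally adds this to the tangent-line inequality $2F^p(\tfrac{\xi_1+\xi_2}{2})\ge 2F^p(\xi_1)+pF^{p-1}(\xi_1)F_\xi(\xi_1)\cdot(\xi_2-\xi_1)$ coming from plain convexity of $F^p$. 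The factor $2^{1-p}$ falls out automatically from the midpoint scaling. Your method is the more direct analytic one---Hessian formula for $F^p$, Taylor remainder, and a lower bound on $\int_0^1(1-t)|r-t|^{p-2}\,dt$---and is in fact the alternative the paper itself acknowledges in the remark immediately after the lemma (following Lindqvist). The trade-off is exactly what you flag: the uniform-convexity route delivers the stated constant on the nose with a $p$-free $\sigma_F$, whereas yours needs an extra absorption step.

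One caution: your closing ``cleaner alternative'' of transferring the Euclidean inequality $|x+y|^p\ge|x|^p+2^{1-p}|y|^p+p|x|^{p-2}x\cdot y$ through the norm equivalence does not work as stated. Norm equivalence controls $F^p(\xi)$ and $|\xi|^p$ against each other, but the first-order term you need is $pF^{p-1}(\xi_1)F_\xi(\xi_1)\cdot\xi_2=\nabla(F^p)(\xi_1)\cdot\xi_2$, which is a different linear functional of $\xi_2$ than $p|\xi_1|^{p-2}\xi_1\cdot\xi_2$; there is no multiplicative comparison between them that survives for all $\xi_2$. Your main Taylor argument is the one to keep.
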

\begin{proof}

First we note that the space $\Rn$ endowed with the norm $F$, is $p$-uniformly convex in the sense of \cite{BCL}, i.e. there exists a constant $\sigma_{F}$ such that.
\begin{equation}\label{A1}
 F^p\Big(\frac{\xi_1+\xi_2}{2}\Big)+\frac{1}{\sigma_F^p}F^p\Big(\frac{\xi_1-\xi_2}{2}\Big)
  \leq
   \frac12F^p(\xi_1)+\frac12F^p(\xi_2)\hspace{1em}\forall~\xi_1,\xi_2\in\Rn.
\end{equation}
Indeed, following the argument of  \cite[Proposition 4.6]{O}, recalling that for any $w \in \Rn \setminus \{0\}$  and $v \in \Rn$ with $F(v)=1$ it holds
\[
\lim_{\eps \to 0} \frac{1}{2\eps^{2}} \left[  F^2 (w-\eps v) + F^2 (w+\eps v) - 2F^2(w)  \right] = \nabla^{2}F^{2}[w](v,v),
\]
we easily infer that
\[
\begin{split}
\inf_{v\in \Rn,\,F(v)=1}\, \inf_{w\in \Rn\setminus\{0\}} & \left\{  \lim_{\eps \to 0} \frac{1}{2\eps^{2}} \left[  F^2 (w-\eps v) + F^2 (w+\eps v) - 2F^2(w)  \right] \right\} \\
 & = \frac{1}{2}  \left[ \sup_{v\in \Rn,\,F(v)=1}\, \sup_{w\in \Rn\setminus\{0\}} \,\big(\nabla^{2}F^{2}[w](v,v) \big)^{-1}\right]^{-1} \\
 & = \frac{1}{2} \left[ \max_{v\in \Rn,\,F(v)=1}\, \max_{w\in \mathcal{S}^{n-1}} \,\big(\nabla^{2}F^{2}[w](v,v) \big)^{-1}\right]^{-1} \\
 & =  \frac{1}{\sigma^{2}_{F}} \leq 1
 \end{split}
\]
where in the last equality we have used the zero-homogeneity of $\nabla^{2} F^{2}[w]$, the regularity of $F$, and that fact that (by Euler's Theorem)
\[
\sigma^{2}_{F}:=  \max_{v,w \in \mathcal{S}^{n-1}} \,\frac{2 F^{2}(v)}{\nabla^{2}F^{2}[w](v,v)} =   \max_{v,w \in \mathcal{S}^{n-1}} \,\frac{\nabla^{2}F^{2}[v](v,v)}{\nabla^{2}F^{2}[w](v,v)} \geq 1.
\]
Therefore for any $w \in \Rn \setminus \{0\}$  and $v \in \Rn$ with $F(v)=1$ the function
\[
\gamma(t):= F^{2}(w+tv)-\frac{t^{2}}{\sigma^{2}_{F}}
\]
is convex. Then the following 2-uniform convexity easily follows:
\[
 F^2\Big(\frac{\xi_1+\xi_2}{2}\Big)+\frac{1}{\sigma_F^2}F^2\Big(\frac{\xi_1-\xi_2}{2}\Big)
  \leq
   \frac12F^2(\xi_1)+\frac12F^2(\xi_2)\hspace{1em}\forall~\xi_1,\xi_2\in\Rn.
\]
Raising the previous inequality to $p/2$ and using the convexity of the function $f(t)=t^{p/2}$ (since $p\geq 2$) we easily deduce \eqref{A1}.

Now we observe that, the convexity of $F^p$ ensures that
\begin{equation}\label{A2}
 2F^p\Big(\frac{\xi_1+\xi_2}{2}\Big)
  \geq
   2F^p(\xi_1)+pF^{p-1}(\xi_1)F_{\xi}(\xi_1)\cdot(\xi_2-\xi_1)\hspace{1em}\forall~\xi_1,\xi_2\in\Rn.
\end{equation}
	Finally, combining \eqref{A1} and \eqref{A2}, we infer that for any $\xi_1,\xi_2\in\Rn$ it holds
\[
 \begin{split}
  F^p(\xi_2)
   \geq &
     -F^p(\xi_1)+2F^p\Big(\frac{\xi_1+\xi_2}{2}\Big)+\frac{2}{\sigma_F^p}F^p\Big(\frac{\xi_1-\xi_2}{2}\Big)
      \\
       \geq &
        F^p(\xi_1)+pF^{p-1}(\xi_1)F_{\xi}(\xi_1)\cdot(\xi_2-\xi_1)+\frac{2}{\sigma_F^p}F^p\Big(\frac{\xi_1-\xi_2}{2}\Big),
\end{split}
\]
as claimed in \eqref{convex}.  \end{proof}
\begin{remark}
We explicitly remark that the claim of the previous lemma holds even if we drop the $C^2$ regularity assumption on $F$ and we assume $C^1$ regularity and the $p$-uniform convexity \eqref{A1}.
Moreover the previous lemma can be proved also following the line of the \cite[Appendix]{Ln}, by expanding directly the function $f(t):=F^p(\xi_1+t(\xi_2-\xi_1))$ with the Maclaurin's formula and  estimating the second order terms using the $p$-uniform convexity.
\end{remark}

\begin{theorem}\label{Hardy-Sobolev-anis}
Let $\Om\subset\Rn$ be a domain such that that $ r_\Omega < \infty$ and that \eqref{H1} holds true. Suppose there exists $\delta >0$ and $\omega\Subset\Om$ such that $\Om$ is $(\omega,a-\delta)$-feeble-regular with the constant $a$ defined by \eqref{def-a}. Let $\alpha \in [0,1)$ and  $p\in \big[ 2, \frac{n}{1-\alpha} \big)$. Then there exists a positive constant $C_5=C_5(n,p,\alpha,F)$ such that for any $u\in\test(\Om)$ we have
	\begin{equation}\label{thm:HS-ani}
	C_5 \left\|\frac{u}{d_F^{\alpha}}\right\|_{L^{p^{*,\alpha}}(\Om)}^{p} \leq \intom F^p(\nabla u)\dd x-\Big(\frac{p-1}{p}\Big)^p\intom\frac{|u|^p}{d_F^p}\dd x  .
	\end{equation}
\end{theorem}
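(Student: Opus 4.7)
The plan is to apply the ``ground state substitution'' of \cite{BFT}: I would set $v:=d_F^{-(p-1)/p}u$, so that (since $1/p'=(p-1)/p$) $d_F^{1/p'-\alpha}v=d_F^{-\alpha}u$, and expand $F^p(\nabla u)=F^p(\xi_1+\xi_2)$ with
\[
\xi_1:=\tfrac{p-1}{p}\,v\,d_F^{-1/p}\nabla d_F,\qquad\xi_2:=d_F^{(p-1)/p}\nabla v,
\]
via the strong convexity inequality \eqref{convex} of the previous lemma (this is exactly where $p\geq 2$ enters). The strategy is to identify the first term of the expansion as the sharp Hardy term, drop the (nonnegative) cross term using \eqref{H1}, and control the remaining weighted $F^p(\nabla v)$ integral from below by the Sobolev norm through Corollary \ref{cor_02}. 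Since $F$ is even, it suffices to treat $u\geq 0$; the general case is recovered by a routine approximation of $|u|$ (for instance by $\sqrt{u^2+\eps^2}-\eps$ as $\eps\to 0^+$), using that $F^p(\nabla u)=F^p(\nabla|u|)$ almost everywhere.

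The three terms in \eqref{convex} are then computed from the eikonal identity $F(\nabla d_F)=1$ (see \eqref{Fd}) and the positive $1$-homogeneity of $F$ (and $0$-homogeneity of $F_\xi$):
\[
F^p(\xi_1)=\Big(\tfrac{p-1}{p}\Big)^p\frac{u^p}{d_F^p},\qquad F^p(\xi_2)=d_F^{p-1}F^p(\nabla v),
\]
and, using $pv^{p-1}\nabla v=\nabla(v^p)$,
\[
pF^{p-1}(\xi_1)F_\xi(\xi_1)\cdot\xi_2=\Big(\tfrac{p-1}{p}\Big)^{p-1}F_\xi(\nabla d_F)\cdot\nabla(v^p).
\]
Integrating \eqref{convex} over $\Om$ I expect to obtain
\[
\intom F^p(\nabla u)\,\dd x-\Big(\tfrac{p-1}{p}\Big)^p\intom\frac{u^p}{d_F^p}\,\dd x\geq\tfrac{2^{1-p}}{\sigma_F^p}\intom d_F^{p-1}F^p(\nabla v)\,\dd x+\Big(\tfrac{p-1}{p}\Big)^{p-1}\intom F_\xi(\nabla d_F)\cdot\nabla(v^p)\,\dd x.
\]

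The last integral on the right is nonnegative by \eqref{H1}: since $F(\nabla d_F)=1$ a.e., the hypothesis $-\Delta_F d_F\geq 0$ in $\mathcal{D}'(\Om)$ reads $\int_\Om F_\xi(\nabla d_F)\cdot\nabla\varphi\,\dd x\geq 0$ for every $\varphi\in\test(\Om)$ with $\varphi\geq 0$, and this extends by density to the compactly supported Lipschitz nonnegative test function $v^p$. Setting $c:=\min\{2^{1-p}/\sigma_F^p,\,((p-1)/p)^{p-1}\}>0$, the right hand side above dominates $c$ times the sum of its two nonnegative integrals, which is precisely the right hand side of Corollary \ref{cor_02} applied to $v$; replacing $d_F^{1/p'-\alpha}v$ by $d_F^{-\alpha}u$ in its left hand side gives \eqref{thm:HS-ani} with $C_5:=cC_4$.

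The main conceptual step (the strong convexity expansion with the quantitative defect term $\sigma_F^{-p}2^{1-p}F^p(\xi_2)$) is already packaged in \eqref{convex}, so the rest of the work is essentially bookkeeping. The real obstacle will be the technical one of making the argument rigorous for sign-changing $u\in\test(\Om)$: the substitution $v=d_F^{-(p-1)/p}u$ produces only a Lipschitz function, so one must justify both the passage from $u$ to $|u|$ and the use of $v^p$ as a test function in the distributional inequality \eqref{H1}. Both difficulties are dealt with by the mollification of $|u|$ mentioned above, after which the conclusion follows from Corollary \ref{cor_02} and \eqref{convex} almost by direct substitution.
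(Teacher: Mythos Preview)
Your proposal is correct and follows essentially the same route as the paper: the ground state substitution $u=d_F^{1-1/p}v$, the strong convexity inequality \eqref{convex} applied with $\xi_1=(1-1/p)d_F^{-1/p}v\nabla d_F$ and $\xi_2=d_F^{1-1/p}\nabla v$, integration to obtain \eqref{convex3}, and then combination with Corollary \ref{cor_02}. Your explicit step of taking $c=\min\{2^{1-p}/\sigma_F^p,\,((p-1)/p)^{p-1}\}$ to pass from the weighted to the unweighted sum, and your discussion of the sign/regularity issues for $v^p$ as a test function in \eqref{H1}, simply make visible details that the paper's proof leaves implicit.
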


\begin{proof} In the spirit of \cite{BFT} we obtain an auxiliary lower bound for the {\it anisotropic Hardy difference} defined by
\[
I_p[u;\Om]:=\intom F^p(\nabla u)\dd x-\Big(\frac{p-1}{p}\Big)^p\intom\frac{|u|^p}{d_F^p}\dd x,\hspace{1em}u\in\test(\Om).
\]
To this end we consider the function $v$ defined by
\be\label{gs:transform}
u=d_F^{1-1/p}v,
\ee
and we apply inequality \eqref{convex} to $F(\nabla u)$ with $\xi_1=(1-1/p)d_F^{-1/p}v\nabla d_F $ and $\xi_2=d_F^{1-1/p}\nabla v $. Using \eqref{Fd} and the zero homogeneity of $F_\xi$, we can deduce
\begin{equation}\label{convex2}
\begin{split}
F^p(\nabla u) \ge &  \Big(\frac{p-1}{p}\Big)^p d_F^{-1} |v|^p+\frac{2^{1-p}}{\sigma^p_F}d_F^{p-1} F^p(\nabla v) \\
& +p\Big(\frac{p-1}{p}\Big)^{p-1} |v|^{p-1} F_{\xi}(\nabla d_F )\cdot\nabla v.
\end{split}
\end{equation}
Integrating \eqref{convex2} and recalling \eqref{gs:transform}, we have
\begin{equation}\label{convex3}
I_p[u;\Om]\geq \frac{2^{1-p}}{\sigma^p_F}\int_\Om d_F^{p-1} F^p(\nabla v) \dx+\Big(\frac{p-1}{p}\Big)^{p-1}
\int_\Om  F_{\xi}(\nabla d_F ) \cdot \nabla (|v|^p) \dx.
\end{equation}
The result follows combining Corollary \ref{cor_02}, \eqref{convex3} and \eqref{gs:transform}. 
\end{proof}
\subsection{Anisotropic Hardy-Morrey inequality}
We start by recalling the definition of Morrey spaces.

\begin{definition}[Morrey spaces]
	Assume $\Omega \subset \R^n$ open, $\lambda\geq 0$ and $1\leq p< \infty$. We say that $f\in L^p(\Omega)$ belongs to $L^{p,\lambda}(\Omega)$ if
	\[
	\sup_{0 < r < D_\Omega, x_0\in \Omega} \frac{1}{r^\lambda} \int_{\Omega \cap B_r(x_0)} |f|^p \dx < + \infty,
	\]
	where
	$D_\Omega$ is the diameter of $\Omega$. On $L^{p,\lambda}(\Omega)$ is defined the norm
	\[
	\|f\|_{L^{p,\lambda}(\Om)}:= \Big(\sup_{0 < r < D_\Omega, x_0\in \Omega} \frac{1}{r^\lambda} \int_{\Omega \cap B_r(x_0)} |f|^p \dx \Big)^{1/p}.
	\]
\end{definition}

We have the following theorem

\begin{theorem}\label{thm-HM}
	Under the same assumptions of Theorem \ref{Hardy-Sobolev-anis}, assume in addition that $\Om$ a uniformly Lipschitz domain with finite measure. Then there exists a positive constant $C_6=C_6(n,p,F)$ such that
	\begin{equation}\label{HM-ani}
	\| F(\nabla u) \|_{L^{1, n-n/p}(\Om)} \leq C_6 \big(I_p[u;\Omega]\big)^{1/p}\hspace{1em}\forall~u\in\test(\Om).
	\end{equation}
\end{theorem}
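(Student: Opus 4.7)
The strategy is to reuse the substitution $v = d_F^{-(p-1)/p} u$ from the proof of Theorem \ref{Hardy-Sobolev-anis} and to estimate the Morrey-seminorm of $F(\nabla u)$ by splitting it into two weighted pieces, each controllable by $I_p[u;\Om]^{1/p}$.

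By the Leibniz rule and $F(\nabla d_F)=1$ one has
\[
F(\nabla u) \le d_F^{(p-1)/p} F(\nabla v) + \tfrac{p-1}{p}\, d_F^{-1/p}|v|,
\]
while inequality \eqref{convex3}, an integration by parts relying on the identity $\Div F_\xi(\nabla d_F)=\Delta_F d_F$, and assumption \eqref{H1} provide the control
\[
\int_\Om d_F^{p-1} F^p(\nabla v)\, \dd x \le 2^{p-1}\sigma_F^p\, I_p[u;\Om].
\]
In addition, since $d_F^{1/p'} v = u$, Theorem \ref{Hardy-Sobolev-anis}, which holds for \emph{every} $\alpha\in[0,1)$ satisfying $p<n/(1-\alpha)$, can be rewritten as $\|u/d_F^{\alpha}\|_{L^{p^{*,\alpha}}(\Om)}^p \le C_5^{-1}\, I_p[u;\Om]$.

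Now fix $x_0\in\R^n$ and $0<r<D_\Om$. Using $d_F^{-1/p}|v|=|u|/d_F$ I would split
\[
\int_{B_r(x_0)\cap\Om} F(\nabla u)\, \dd x \le \int_{B_r(x_0)\cap\Om} d_F^{(p-1)/p} F(\nabla v)\, \dd x + \tfrac{p-1}{p}\int_{B_r(x_0)\cap\Om} \frac{|u|}{d_F}\, \dd x.
\]
H\"older's inequality with conjugate exponents $p$ and $p/(p-1)$ together with $|B_r(x_0)|^{(p-1)/p}\le c_n\, r^{n-n/p}$ immediately bounds the first summand by $C r^{n-n/p} I_p[u;\Om]^{1/p}$. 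For the second I write $|u|/d_F = (|u|/d_F^{\alpha}) d_F^{\alpha-1}$ and apply H\"older with exponents $p^{*,\alpha}$ and $(p^{*,\alpha})'$ to get
\[
\int_{B_r(x_0)\cap\Om} \frac{|u|}{d_F}\,\dd x \le \left\|\frac{u}{d_F^{\alpha}}\right\|_{L^{p^{*,\alpha}}(\Om)}\! \left(\int_{B_r(x_0)\cap\Om}\! d_F^{-(1-\alpha)(p^{*,\alpha})'}\,\dd x\right)^{\!\!1/(p^{*,\alpha})'}\!\!.
\]
A direct calculation yields $(1-\alpha)(p^{*,\alpha})' = np(1-\alpha)/(n(p-1)+p(1-\alpha))$, and one can always choose $\alpha\in[0,1)$, depending only on $n,p$, so that this exponent $\beta$ lies in $(0,1)$: the requirement reduces to $\alpha > (n-p)/(p(n-1))$ when $p<n$ and is automatic otherwise, to which one adjoins $\alpha > 1-n/p$ to keep $p<n/(1-\alpha)$.

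The decisive technical step is then the weighted boundary estimate
\[
\int_{B_r(x_0)\cap\Om} d_F^{-\beta}\,\dd x \le C(\Om,F,n,\beta)\, r^{n-\beta},\qquad \beta\in(0,1),\ 0<r<D_\Om,
\]
which I would prove by separating the case $d_F(x_0) \gtrsim r$ (so that $d_F\gtrsim r$ on the whole ball and the bound is trivial) from the boundary case, where the uniform Lipschitz parametrization of $\partial\Om$ together with the equivalence of $d_F$ to the Euclidean distance to the boundary reduces the integral to a one-dimensional computation $\int_0^{Cr} s^{-\beta}\,\dd s$ times the $(n-1)$-dimensional Lebesgue measure of a ball of radius $r$. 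Combined with the algebraic identity $n/(p^{*,\alpha})' - (1-\alpha) = n-n/p$, this bounds the second summand also by $C r^{n-n/p} I_p[u;\Om]^{1/p}$. Dividing by $r^{n-n/p}$ and taking the supremum over $(x_0,r)$ concludes the proof of \eqref{HM-ani}. The main obstacle is the boundary integrability estimate above, since all other steps are essentially H\"older's inequality coupled with the already proven Hardy--Sobolev bound.
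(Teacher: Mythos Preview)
Your argument is correct and follows the paper's route almost verbatim up to the key technical step: the substitution $u=d_F^{1-1/p}v$, the splitting of $F(\nabla u)$, the H\"older bound for the gradient piece via \eqref{convex3}+\eqref{H1}, and the H\"older bound for $|u|/d_F$ via Theorem~\ref{Hardy-Sobolev-anis} are exactly the content of the paper's Lemma~\ref{lemmadistance}. You are also more explicit than the paper about the choice of $\alpha$ needed to force $\beta=(1-\alpha)(p^{*,\alpha})'\in(0,1)$.

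The one genuine difference is how the estimate $\int_{B_r\cap\Om} d_F^{-\beta}\,\dd x\le C\,r^{n-\beta}$ is obtained. You propose the elementary route: use the equivalence of $d_F$ with the Euclidean boundary distance and a direct Lipschitz-graph computation. The paper instead proves a dedicated Proposition~\ref{lemma:calculus} by writing $d_F^{-\beta}=(1-\beta)^{-1}F_\xi(\nabla d_F)\cdot\nabla d_F^{1-\beta}$, integrating by parts via the generalized Gauss--Green theorem, and using the sign condition $-\Delta_F d_F\ge0$ to control the volume term; the resulting boundary term involves $Q_r=\Hnmo(B_r\cap\partial\Om)/\Hnmo(\partial B_r)$, which is then bounded uniformly using the uniform Lipschitz and finite-measure assumptions (and, as they remark, alternatively under convexity alone). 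Your approach is shorter and does not invoke \eqref{H1} for this step, while the paper's argument keeps an anisotropic flavor and isolates the geometric input in the single quantity $Q_r$, which makes it easier to trade the uniform Lipschitz assumption for other hypotheses on $\Om$.
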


We start the proof with a lemma that provides a weaker form of \eqref{HM-ani}.

\begin{lemma}\label{lemmadistance}
Under the same assumptions of Corollary \ref{cor_02} there exists a constant $C_7=C_7(n,p,\alpha,F)>0$ such that with $\theta:=(p^{*,\alpha})' (1-\alpha)$, there holds
\begin{equation*}
 \| F(\nabla u) \|_{L^{1, n-n/p}(\Om)}
  \leq
   C_7 \big(I_p[u,\Omega]\big)^{1/p} \Big(1 +\big \| d_F^{-1} \big\|^{1-\alpha}_{L^{\theta,n-\theta}(\Om)}\Big)\hspace{1em}\forall~u\in\test(\Om).
\end{equation*}
\end{lemma}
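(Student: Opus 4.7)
The plan is to fix an admissible pair $(x_0,r)$ with $x_0\in\Om$ and $0<r<D_\Om$, estimate $\int_{\Om\cap B_r(x_0)} F(\nabla u)\,\dx$, divide by $r^{n-n/p}$ and take the supremum. The starting point is the same change of variables $u=d_F^{1/p'}v$ used in Theorem \ref{Hardy-Sobolev-anis}: the Leibniz rule together with the sub-linearity of $F$ and the eikonal identity \eqref{Fd} give the pointwise bound
\[
F(\nabla u)\le \frac{1}{p'}\,d_F^{-1/p}|v|+d_F^{1/p'}F(\nabla v),
\]
which splits the local integral into a \emph{gradient piece} and a \emph{distance piece}, to be treated separately.

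For the gradient piece I would use H\"older with exponents $p$ and $p'$ to obtain
\[
\int_{\Om\cap B_r} d_F^{1/p'}F(\nabla v)\,\dx \le \Bigl(\int_{\Om} d_F^{p-1}F^p(\nabla v)\,\dx\Bigr)^{1/p}|\Om\cap B_r|^{1/p'},
\]
with $|\Om\cap B_r|^{1/p'}\le \omega_n^{1/p'}r^{n-n/p}$. The first factor is in turn bounded by $C\,I_p[u;\Om]^{1/p}$ via \eqref{convex3}, once one notices that integration by parts together with hypothesis \eqref{H1} gives
\[
\int_{\Om}F_\xi(\nabla d_F)\cdot\nabla(|v|^p)\,\dx =\int_{\Om}|v|^p(-\Delta_F d_F)\,\dx\ge 0,
\]
so the second term on the right-hand side of \eqref{convex3} may be dropped. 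After dividing by $r^{n-n/p}$, this contributes precisely the constant $1$ inside the factor $\bigl(1+\|d_F^{-1}\|^{1-\alpha}_{L^{\theta,n-\theta}(\Om)}\bigr)$.

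For the distance piece, the key observation is the factorisation $d_F^{-1/p}=d_F^{1/p'-\alpha}\cdot d_F^{-(1-\alpha)}$, which suggests applying H\"older with the conjugate exponents $p^{*,\alpha}$ and $(p^{*,\alpha})'$:
\[
\int_{\Om\cap B_r} d_F^{-1/p}|v|\,\dx \le \bigl\|d_F^{1/p'-\alpha}v\bigr\|_{L^{p^{*,\alpha}}(\Om)}\Bigl(\int_{\Om\cap B_r} d_F^{-\theta}\,\dx\Bigr)^{1/(p^{*,\alpha})'}.
\]
The first factor is bounded by $C\,I_p[u;\Om]^{1/p}$ via Theorem \ref{Hardy-Sobolev-anis}, since $u/d_F^{\alpha}=d_F^{1/p'-\alpha}v$. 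The second factor is controlled directly by the Morrey norm through the definition
\[
\int_{\Om\cap B_r} d_F^{-\theta}\,\dx \le r^{n-\theta}\|d_F^{-1}\|^{\theta}_{L^{\theta,n-\theta}(\Om)},
\]
and raising to the $1/(p^{*,\alpha})'$ power turns the prefactor on $\|d_F^{-1}\|$ into exactly $\theta/(p^{*,\alpha})'=1-\alpha$, as required.

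The computation that glues everything together is the exponent-matching identity $(n-\theta)/(p^{*,\alpha})'=n-n/p$, which is a short algebraic check based on $n/(p^{*,\alpha})'=n-n/p+1-\alpha$. With this identity in hand, both pieces produce a factor of $r^{n-n/p}$, so dividing through by $r^{n-n/p}$ and taking the supremum over $x_0\in\Om$ and $0<r<D_\Om$ yields the claim. I do not expect a serious obstacle; the only delicate ingredient is the use of \eqref{H1} to discard the $\Delta_F d_F$-term in \eqref{convex3}, without which the $L^p$-weighted gradient of $v$ would not be directly dominated by $I_p[u;\Om]$ and the decoupling of the two pieces would break down.
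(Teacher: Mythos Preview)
Your proposal is correct and follows essentially the same route as the paper: the same substitution $u=d_F^{1/p'}v$, the same two-term splitting into $I_1$ and $I_2$, H\"older with exponents $p,p'$ together with \eqref{convex3} and \eqref{H1} for the gradient piece, and H\"older with exponents $p^{*,\alpha},(p^{*,\alpha})'$ together with Theorem \ref{Hardy-Sobolev-anis} and the identity $(n-\theta)/(p^{*,\alpha})'=n-n/p$ for the distance piece. The only cosmetic difference is that the paper rewrites $d_F^{-1/p}|v|$ as $d_F^{-\alpha}|u|\cdot d_F^{\alpha-1}$ before applying H\"older, whereas you keep it as $d_F^{1/p'-\alpha}|v|\cdot d_F^{-(1-\alpha)}$; since $d_F^{1/p'-\alpha}v=d_F^{-\alpha}u$, this is the same computation.
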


\begin{proof}
	The proof  follows closely that of \cite[Theorem 6.1]{FPs}. Setting $u=d^{1-1/p}v$, by the properties of $F$ we have for a given $x_0\in \Omega$, writing $B_r:=B_r(x_0)$,
	
	\be\nonumber
	\begin{split}
	\int_{\Omega \cap B_r} F(\nabla u)\dd x & \leq \int_{\Omega \cap B_r}d_F^{1-1/p}|F(\nabla v)|\dd x
	+ \frac{p-1}{p}\int_{\Omega \cap B_r}d_F^{-1/p}|v|\dd x \\
	& =: I_1+I_2.
	\end{split}
	\ee
	Using first H\"{o}lder's inequality and then (\ref{convex3}) coupled with the assumption \eqref{H1}, we can write for a positive constant $\kappa(n,p,F)$
	\[
	\begin{split}
	I_1 & \leq  \Big(\int_{\Omega \cap B_r}d_F^{p-1}|F(\nabla v)|^p\dd x\Big)^{1/p}(\omn r^n)^{1-1/p} \\
	& \leq \kappa(n,p,F)\big(I_p[u;\Om]\big)^{1/p}r^{n-n/p},
	\end{split}
	\]
where $\omn$ denote the volume of the unit ball in $\mathbb{R}^n$.
	We will next estimate $I_2$. To this end, we first rewrite it in terms of the original function $u$ and by Holder's inequality
	\begin{align}\nonumber
	I_2
	& =\frac{p-1}{p}
	\int_{\Omega \cap B_r}d_F^{-\alpha}|u|d_F^{\alpha-1}\dd x
	\\ \nonumber & \leq \frac{p-1}{p}
	\Big(\int_{\Omega \cap B_r}\big(d_F^{-\alpha}|u|\big)^{p^{*,\alpha}}\dd x\Big)^{1/p^{*,\alpha}}\Big(\int_{\Omega \cap B_r}d_F^{-(1-\alpha)(p^{*,\alpha})'}\dx\Big)^{1/(p^{*,\alpha})'}.
	\end{align}
	Multiply and divide by $r^{n-n/p}$, noting $n-n/p=(n-\theta)/(p^{*,\alpha})'$ and also using Theorem \ref{Hardy-Sobolev-anis}, we arrive at
	\[
	I_2
	\leq \kappa(n,p,\alpha,F)
	\big(I_p[u;\Om]\big)^{1/p}r^{n-n/p}\big \| d_F^{-1} \big\|^{1-\alpha}_{L^{\theta,n-\theta}(\Om)}.
	\]
	We get the desired result combining the previous estimates.
 \end{proof}

The next ingredient is a technical proposition that is the anisotropic version of \cite[Proposition 5.3]{FPs}.

\begin{proposition}\label{lemma:calculus}
	Let $\Om\subset\Rn,~n\geq2$, be a locally Lipschitz domain satisfying \eqref{H1}. For any $0<\theta<1$ and any $r>0$, defined
	\[
	M_r(\theta):=\int_{B_r\cap\Om}d_F^{-\theta}\dd x
	~~~~\mbox{ and }~~~~Q_r:=\frac{\Hnmo(B_r\cap\prt\Om)}{\Hnmo(\partial B_r)},
	\]
	it holds
	\[
	M_r(\theta)
	\leq
	\frac{\omn}{1-\theta}\Big(n \sup_{\mathbb{S}^{n-1}}| F_\xi| +1-\theta+nQ_r\Big)r^{n-\theta},
	\]
where $\omn$ denote the volume of the unit ball in $\mathbb{R}^n$.
\end{proposition}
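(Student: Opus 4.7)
The plan is to split $B_r\cap\Om$ according to whether $d_F$ exceeds the radius $r$ or not, and to treat the near-boundary piece by an anisotropic divergence identity that exploits the key hypothesis \eqref{H1}. On the ``bulk'' set $B_r\cap\Om\cap\{d_F>r\}$ the pointwise bound $d_F^{-\theta}\le r^{-\theta}$ immediately yields a contribution of at most $r^{-\theta}|B_r|=\omn r^{n-\theta}$, which accounts for the summand $(1-\theta)$ inside the parenthesis of the target estimate.

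On the near-boundary piece $A:=B_r\cap\Om\cap\{d_F\le r\}$ I would introduce
\[
f(t):=\frac{t^{1-\theta}-r^{1-\theta}}{1-\theta},\qquad t\in[0,r],
\]
so that $f\le 0$, $f(r)=0$ and $f'(t)=t^{-\theta}$. Using the eikonal identity $F_\xi(\nabla d_F)\cdot\nabla d_F=F(\nabla d_F)=1$, a direct computation yields the pointwise representation
\[
d_F^{-\theta}=\Div\bigl(f(d_F)\,F_\xi(\nabla d_F)\bigr)-f(d_F)\,\Delta_F d_F\qquad\text{on }A.
\]
Since $-f\ge 0$ on $A$ and $-\Delta_Fd_F\ge 0$ in $\mathcal{D}'(\Om)$ by \eqref{H1}, the second summand integrates to a non-positive quantity on $A$, so the divergence theorem yields
\[
\int_A d_F^{-\theta}\,\dd x\;\le\;\int_{\partial A}f(d_F)\,F_\xi(\nabla d_F)\cdot\nu\,\dd\Hnmo.
\]
The boundary $\partial A$ splits into $B_r\cap\partial\Om$, $\partial B_r\cap\Om\cap\{d_F\le r\}$ and the intermediate surface $B_r\cap\Om\cap\{d_F=r\}$. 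On the last piece $f\equiv 0$, so it contributes nothing; on the other two the pointwise bounds $|f(d_F)|\le r^{1-\theta}/(1-\theta)$ and $|F_\xi(\nabla d_F)|\le\sup_{\mathbb{S}^{n-1}}|F_\xi|$ (the latter via the eikonal equation and the $0$-homogeneity of $F_\xi$), combined with $\Hnmo(B_r\cap\partial\Om)=n\omn r^{n-1}Q_r$ and $\Hnmo(\partial B_r\cap\Om)\le n\omn r^{n-1}$, deliver each piece directly. Summing with the bulk contribution yields the asserted estimate.

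The main technical hurdle is that $d_F$ is only Lipschitz under the local Lipschitz hypothesis on $\Om$, so $\Delta_Fd_F$ is a distribution---by \eqref{H1} a non-positive Radon measure---and the divergence identity above must be read with care. A rigorous justification proceeds by approximation: one tests the distributional definition of $\Delta_Fd_F$ against the Lipschitz function $f\bigl(d_F\wedge(r-\eps)\bigr)$ in order to avoid the transition surface $\{d_F=r\}$ while keeping control on $\partial\Om$, and then passes to the limit $\eps\downarrow 0^+$ using monotone convergence together with the definite signs of $-f$ and $-\Delta_Fd_F$. The local Lipschitz regularity of $\partial\Om$ guarantees that the $\Hnmo$-traces and the outward normal $\nu$ on the relevant pieces of $\partial A$ are well-defined.
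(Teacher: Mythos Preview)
Your approach is essentially the paper's: both split $B_r\cap\Om$ into $\{d_F\ge r\}$ and $\{d_F<r\}$, bound the first piece trivially by $\omn r^{n-\theta}$, and treat the second by the divergence identity for $F_\xi(\nabla d_F)$ combined with \eqref{H1}. Your device of introducing $f(t)=(t^{1-\theta}-r^{1-\theta})/(1-\theta)$ packages into a single step what the paper does in two (it first integrates by parts with $d_F^{1-\theta}$, then estimates the volume term $\int d_F^{1-\theta}(-\lapdf)$ by $r^{1-\theta}\int(-\lapdf)$ and converts this back into a boundary term), but the resulting boundary integrand $(d_F^{1-\theta}-r^{1-\theta})F_\xi(\nabla d_F)\cdot\nu$ is identical.

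One discrepancy in the final bookkeeping: applying $|F_\xi(\nabla d_F)|\le\sup_{\mathbb{S}^{n-1}}|F_\xi|$ on \emph{both} boundary pieces, your bounds assemble to
\[
M_r(\theta)\le\frac{\omn}{1-\theta}\Big(n\sup_{\mathbb{S}^{n-1}}|F_\xi|+1-\theta+n\sup_{\mathbb{S}^{n-1}}|F_\xi|\,Q_r\Big)r^{n-\theta},
\]
with an extra factor $\sup_{\mathbb{S}^{n-1}}|F_\xi|$ in front of $Q_r$; this is not the constant in the stated proposition. The paper gets the $nQ_r$ term by treating the piece $B_r\cap\prt\Om$ separately and bounding $|F_\xi(\nabla d_F)\cdot\nu|$ by $1$ there rather than by $\sup|F_\xi|$. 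For the downstream use (Lemma~\ref{lemmadistance} and Theorem~\ref{thm-HM}) either constant is perfectly adequate, so this is a cosmetic mismatch rather than a gap in the argument.
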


\begin{proof} The proof closely follows that of \cite[Proposition 5.3]{FPs} therefore we outline the main ideas and the differences. Writing $\{d_F<r\}$ for the set $\{x\in\Om:d_F(x)<r\}$ and $\{d_F\geq r\}$ for its complement in $\Om$, we have
	\be\label{estim:calc:lemma}
	M_r(\theta)=\int_{B_r\cap\{d_F\geq r\}}d_F^{-\theta}~\dd x+\int_{B_r\cap\{d_F<r\}}d_F^{-\theta}~\dd x.
	\ee
	We can use the monotonicity of $\Ln$ to get the estimate
	\begin{equation}
	\label{estim:calc:lemma:d>r}
	\int_{B_r\cap\{d_F\geq r\}}d_F^{-\theta}~\dd x \leq
	r^{-\theta}\Ln\big(B_r\cap\{d_F\geq r\}\big) \leq
	\omn r^{n-\theta}.
	\end{equation}
	For the second integral of \eqref{estim:calc:lemma} we note first that since $\theta<1$ and $F_\xi(\nabla d_F)\cdot\nabla d_F=1$ a.e. in $\Om$, the generalized Gauss-Green theorem gives\footnote{Since $d_F\in{\rm BV}(\Om)$, its level sets $\{d_F<r\}$ are of finite perimeter for a.e. $r\in(0,\infty)$ (see \cite[Theorem 1-\S5.5]{EvG})}
	\begin{equation*}
	\begin{split}
	(1-\theta)\int_{B_r\cap\{d_F<r\}}d_F^{-\theta}~\dd x
	= & \int_{B_r\cap\{d_F<r\}}F_\xi(\nabla d_F)\cdot\nabla d_F^{1-\theta}~\dd x \\
	= &  \int_{B_r\cap\{d_F<r\}}d_F^{1-\theta}(-\lapdf)\dd x \\
	& + \int_{\prt(B_r\cap\{d_F<r\})}d_F^{1-\theta}F_\xi(\nabla d_F)\cdot\nu\;\dd\Hnmo(x),
	\end{split}
	\end{equation*}
	where $\nu$ is the unit outer normal to $\prt(\Om\cap B_r)$. Using \eqref{H1} the first term on the right hand side is estimated as follows
	\[
	\begin{split}
	\int_{B_r\cap\{d_F<r\}}d_F^{1-\theta}(-\lapdf)\dd x
	& \leq r^{1-\theta}\int_{B_r\cap\{d_F<r\}}(-\lapdf)\dd x
	\\ & =
	-r^{1-\theta}\int_{\prt(B_r\cap\{d_F<r\})} F_\xi(\nabla d_F)\cdot\nu\;\dd\Hnmo(x).
	\end{split}
	\]
	Combining the last two inequalities we arrive at
	\[
	(1-\theta)\int_{B_r\cap\{d_F<r\}}d_F^{-\theta}~\dd x
	\leq
	\int_{\prt(B_r\cap\{d_F<r\})}\big(d_F^{1-\theta}-r^{1-\theta}\big)F_\xi(\nabla d_F)\cdot\nu\;\dd\Hnmo(x).
	\]
	Hence
	\begin{equation*}
	\begin{split}
	(1-\theta)\int_{B_r\cap\{d_F<r\}}d_F^{-\theta}\dd x
	&
	\leq \int_{\prt B_r\cap\{d_F<r\}}\big(r^{1-\theta}-d_F^{1-\theta}\big)| F_\xi(\nabla d_F)\cdot\nu|\dd\Hnmo(x)
	\\  & ~~~~~~
	+ r^{1-\theta}\int_{B_r\cap\prt\Om}| F_\xi(\nabla d_F)\cdot\nu|\dd\Hnmo(x)
	\\  & \hspace{-4em}
	\leq r^{1-\theta}\Big(\int_{\prt B_r\cap\{d_F<r\}}+\int_{B_r\cap\prt\Om}\Big)| F_\xi(\nabla d_F)\cdot\nu|\dd\Hnmo(x)
	\\  & \hspace{-4em} \leq r^{1-\theta}\sup_{\prt B_r\cap\Om}| F_\xi(\nabla d_F)|\Hnmo\big(\prt B_r\cap\{d_F<r\}\big)
	\\  & ~~~~~~ + r^{1-\theta}\Hnmo(B_r\cap \prt\Om)
	\\  & \hspace{-4em} \leq n\omn\sup_{\prt B_r\cap\Om}| F_\xi(\nabla d_F)| r^{n-\theta}
	+ r^{1-\theta}\Hnmo(B_r\cap \prt\Om)
    \\ & \hspace{-4em} \leq n\omn \sup_{\eta\in  \prt B_1(0)}| F_\xi(\eta)|\, r^{n-\theta}
	+ r^{1-\theta}\Hnmo(B_r\cap \prt\Om) ,
	\end{split}
	\end{equation*}
	where in the last estimate we have used the monotonicity of $\Hnmo$ and the zero-homogeneity of $F_\xi$. We conclude by coupling the last inequality with (\ref{estim:calc:lemma}) and   (\ref{estim:calc:lemma:d>r}).
 \end{proof}

\noindent \emph{Proof of Theorem \ref{thm-HM}}
By the assumptions, $\Omega$ satisfies a uniform cone condition that with the finite measure hypothesis implies $\Omega$ is bounded. Therefore, arguing similarly as in  \cite[Lemmas 5.1]{FPs}, $Q_r$ can be estimated uniformly in $r$ by a constant for any $\theta\in(0,1)$. This together with Proposition \ref{lemma:calculus} gives
	\[
	\left \| d_F^{-1} \right\|^{1-\alpha}_{L^{\theta,n-\theta}(\Om)} \leq \kappa(n,p,\alpha,F).
	\]
The result follows by Lemma \ref{lemmadistance}.
\qed

\begin{remark}
	We explicitly remark that in  \cite[Lemma 5.2]{FPs} it is shown that $Q_r$ can be estimated uniformly in $r$ by a constant. Therefore the result of Theorem \ref{thm-HM} continues to hold if we replace the assumptions on  $\Om$ to be a uniformly Lipschitz domain with finite measure with the convexity assumption.
\end{remark}

From classical embedding results on Sobolev, Morrey and Campanato spaces, or arguing more directly using the Morrey's \emph{Dirichlet growth} lemma (as in \cite{FPs}), the following Corollary of Theorem \ref{thm-HM} can be easily proven.

\begin{corollary}
Under the same assumptions of Theorem \ref{thm-HM} we have that for $p>n$, there exists a positive constant $C_8=C_8(n,p,\Om,F)$, such that
\[
 \sup_{\substack{x,y\in\Om \\ x\neq y}}\frac{|u(x)-u(y)|}{|x-y|^{1-n/p}}
  \leq
   C_8\big(I_p[u;\Om]\big)^{1/p}	\hspace{1em}\forall~u\in\test(\Om),
\]
and there exists a positive constant $C_9=C_9(n,p,\Om,F)$ such that
	\[
	\|u\|_{BMO}
	\leq C_9
	\big(I_n[u;\Om]\big)^{1/n}
	\hspace{1em}\forall~u\in\test(\Om).
	\]
\end{corollary}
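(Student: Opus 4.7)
The plan is to combine Theorem \ref{thm-HM} with the classical Morrey-Campanato embedding theorems. Since $F$ is a strictly convex norm on $\R^n$, by equivalence of norms on a finite-dimensional space there exists $c_F>0$ with $|\eta|\leq c_F F(\eta)$ for every $\eta\in\R^n$. Fix $u\in\test(\Om)$ and extend it by zero outside $\Om$; then, invoking the definition of the Morrey norm together with Theorem \ref{thm-HM}, for every ball $B_r(x_0)\subset\R^n$ we obtain
\[
\int_{B_r(x_0)}|\nabla u|\dd x \leq c_F\int_{B_r(x_0)\cap\Om}F(\nabla u)\dd x \leq c_F\,C_6\,\big(I_p[u;\Om]\big)^{1/p}\,r^{n-n/p}.
\]
Set $M:=c_F\,C_6\,\big(I_p[u;\Om]\big)^{1/p}$, so that $\int_{B_r(x_0)}|\nabla u|\dd x \leq M r^{n-n/p}$ uniformly in $x_0$ and $r$.

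For $p>n$, writing $\beta:=1-n/p\in(0,1)$, the previous estimate reads $\int_{B_r(x_0)}|\nabla u|\dd x\leq M r^{n-1+\beta}$. This is exactly Morrey's Dirichlet-growth condition, and a telescoping argument on dyadic balls centered on the segment joining any two points $x,y\in\R^n$ (passing through a ball of radius comparable to $|x-y|$) produces a continuous representative of $u$ satisfying $|u(x)-u(y)|\leq \kappa(n,\beta)\,M\,|x-y|^{\beta}$; restricting to $x,y\in\Om$ yields the first claim with $C_8=\kappa(n,\beta)\,c_F\,C_6$.

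For $p=n$, the estimate becomes $\int_{B_r(x_0)}|\nabla u|\dd x\leq M r^{n-1}$. Combined with the Poincaré inequality on Euclidean balls this gives, uniformly in $x_0$ and $r$,
\[
\frac{1}{|B_r(x_0)|}\int_{B_r(x_0)}\big|u-u_{B_r(x_0)}\big|\dd x \leq \kappa(n)\,\frac{r}{|B_r(x_0)|}\int_{B_r(x_0)}|\nabla u|\dd x \leq \kappa'(n)\,M,
\]
which is precisely the $BMO$ condition, so $\|u\|_{BMO}\leq \kappa'(n)\,M$, i.e. the second claim with $C_9=\kappa'(n)\,c_F\,C_6$. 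The proof is essentially routine once Theorem \ref{thm-HM} is available; the only subtlety worth noting is that, since $u$ is compactly supported in $\Om$, we may extend it by zero and apply the Morrey-Campanato theory on balls of $\R^n$, which circumvents any boundary-regularity issues in the embedding step and makes the dependence of the constants transparent.
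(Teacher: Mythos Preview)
Your proof is correct and follows essentially the same approach as the paper, which simply remarks that the corollary follows from Theorem~\ref{thm-HM} via classical Morrey--Campanato embeddings or, more directly, Morrey's Dirichlet growth lemma (as in \cite{FPs}). You have merely spelled out those details: reducing to the Euclidean gradient via $|\eta|\leq c_F F(\eta)$, extending $u$ by zero to work on balls of $\R^n$, and then invoking the Dirichlet growth lemma for $p>n$ and the Poincar\'e inequality on balls for $p=n$.
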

\noindent{\textbf{Acknowledgements.} G. di Blasio and G. Pisante are members of the Gruppo Nazionale per l'Analisi Matematica, la Probabilit\`{a} e le loro Applicazioni (GNAMPA) of the Istituto Nazionale di Alta Matematica (INdAM) whose support is gratefully acknowledged. G. Psaradakis was supported in part from Universit\`a degli Studi della Campania ``Luigi Vanvitelli" through a visiting researcher position. The authors are grateful to both referees whose remarks and suggestions helped us to considerably enhance the initial version of the article.



\begin{thebibliography}{III}
\bibitem{AFMTV}  Alvino, A., Ferone, A., Mercaldo, A., Takahashi, F., Volpicelli, R.:
 Finsler Hardy-Kato's inequality.
 J. Math. Anal. Appl. 470, 360-374 (2019)

\bibitem{AlFTrL}  Alvino, A., Ferone, V., Trombetti, G., Lions, P.-L.:
 Convex symmetrization and applications.
 Ann. Inst. H. Poincar\'{e} Anal. Non Lin\'{e}aire 14, 275-293 (1997)

\bibitem{Ba} Bal, K.:
 Hardy inequalities for Finsler $p$-Laplacian in the exterior domain.
 Mediterr. J. Math. 14, Art. 165 - 12 pp (2017)

\bibitem{BCL} Ball, K., Carlen, E., Lieb, E.:
 Sharp uniform convexity and smoothness inequalities for trace norms.
 Invent. math.  115, 463-482 (1994)

\bibitem{BFT} Barbatis, G., Filippas, S., Tertikas, A.:
 A unified approach to improved $L^p$ Hardy inequalities with best constants.
 Trans. Amer. Math. Soc. 356, 2169-2196 (2003)

\bibitem{BF} Brasco, L., Franzina, G.:
 Convexity properties of Dirichlet integrals and Picone-type inequalities
 Kodai Math. J. 37, 769-799 (2014)

\bibitem{DdBG} Della Pietra, F., di Blasio, G., Gavitone, N.:
 Anisotropic Hardy inequalities.
 Proc. Roy. Soc. Edinburgh Sect. A 148, 483-498 (2018)

\bibitem{CR-O} Cabr\'{e}, X., Ros-Oton, X.:
 Sobolev and isoperimetric inequalities with monomial weights.
 J. Differential Equations 255, 4312-4336 (2013)

\bibitem{CS} Cianchi, A., Salani, P.:
 Overdetermined anisotropic elliptic problems.
 Math. Ann. 345, 859-881 (2009)

\bibitem{CrM} Crasta, G., Malusa, A.:
 The distance function from the boundary in a Minkowski space.
 Trans. Amer. Math. Soc. 359, 5725-5759 (2007)

\bibitem{DLV}Dyda, B., Lehrb\"{a}ck, J., V\"{a}h\"{a}kangas, A. V.:
 Fractional Hardy-Sobolev type inequalities for half spaces and John domains.
 Proc. Amer. Math. Soc. 146, 3393-3402 (2018)

\bibitem{EvG} Evans, L. C., Gariepy, R. F.:
 Measure Theory and Fine Properties of Funcions.
 Stud. Adv. Math. CRC Press (1991)

\bibitem{FMT} Filippas, S., Maz'ya, V. G., Tertikas, A.:
 Critical Hardy-Sobolev inequalities.
 J. Math. Pures Appl. 87, 37-56 (2007)

\bibitem{FPs} Filippas, S., Psaradakis, G.:
 The Hardy-Morrey \& Hardy-John-Nirenberg inequalities involving distance to the boundary.
 J. Differential Equations 261, 3107-3136 (2016)

\bibitem{FrL} Frank, R. L., Loss, M.:
 Hardy-Sobolev-Maz'ya inequalities for arbitrary domains.
 J. Math. Pures Appl. 97, 39-54 (2011)

\bibitem{GP} Giga, Y., Pisante, G.:
 On representation of boundary integrals involving the mean curvature for mean-convex domains.
 In: Geometric Partial Differential Equations. CRM Series 15, 171-187, Ed. Norm. 2013

\bibitem{Gr} Gromov, M.:
 Sign and geometric meaning of curvature.
 Rend. Semin. Mat. Fis. Milano 61, 9-123 (1991)

\bibitem{KJF} Kufner, A., John, O., Fu\v{c}\'ik, S.:
 Function Spaces. Monographs and Textbooks on Mechanics of Solids and Fluids; Mechanics: Analysis. Noordhoff International Publishing, Leyden; Academia, Prague (1977)

\bibitem{LLL} Lewis, R. T., Li, J., Li, Y.-Y.:
 A geometric characterization of a sharp Hardy inequality.
 J. Funct. Anal. 262, 3159-3185 (2012)

\bibitem{Ln} Lindqvist, P.:
 On the equation $\Div(|\nabla u|^{p-2}\nabla u)+\lambda|u|^{p-2}u=0$.
 Proc. Amer. Math. Soc. 109, 157-164 (1990)

\bibitem{Mz} Maz'ya, V. G.:
 Sobolev spaces. Translated from Russian by Tatyana Shaposhnikova.
 Springer Ser. Soviet Math., Springer (1985)

\bibitem{MSh} Maz'ya, V. G., Shaposhnikova, T.:
 A collection of sharp dilation invariant integral inequalities for differentiable functions.
 In: Sobolev Spaces in Mathematics I, 223-247, Int. Math. Ser. (N. Y.) 8, Springer (2009)

\bibitem{MST} Mercaldo, A., Sano, M., Takahashi, F.:
 Finsler Hardy inequalities.
 arXiv:1806.04901v2

\bibitem{O} Ohta, S.-i.:
 Uniform convexity and smoothness, and their applications in Finsler geometry.
 Math. Ann. 343, 669-699 (2009)

 \bibitem{Ng} Nguyen, V. H.:
 Sharp weighted Sobolev and Gagliardo-Nirenberg inequalities on half spaces via mass transport and consequences.
 Proc. London Math. Soc. 111, 127-138 (2015)

\bibitem{Roc70} Rockafellar, R. T.:
 Convex Analysis.
 Princeton Math. Ser. 28, Princeton (1970)

\bibitem{Ps} Psaradakis, G.:
 $L^1$ Hardy inequalities with weights.
 J. Geom. Anal. 23, 1703-1728 (2013)

\bibitem{Sch13} Schneider, R.:
 Convex Bodies: the Brunn-Minkowski Theory (2nd expanded edition).
 Encyclopedia Math. Appl. 151. Cambridge Univ. Press (2014)

\bibitem{Vn} Van Schaftingen, J.:
 Anisotropic symmetrization.
 Ann. Inst. H. Poincar\'{e} Anal. Non Lin\'{e}aire 23, 539-565 (2006)
\end{thebibliography}
\end{document}